\documentclass [a4 paper,12pt]{article}
\usepackage{mathrsfs}

\usepackage[usenames]{color}
\usepackage{bbm,amsmath,amssymb,amsfonts,amsthm,graphics,graphicx}

\setlength{\hoffset}{-0.47in}\textwidth = 16cm
\setlength{\voffset}{-0.9in}\textheight = 24cm
\newtheorem{lem}{Lemma}[section]
\newtheorem{thm}[lem]{Theorem}

\DeclareMathOperator{\V}{\mathbbm{Var}}
\DeclareMathOperator{\E}{\mathbbm{E}}
\DeclareMathOperator{\En}{\mathscr{E}}

\DeclareMathOperator{\si}{\sigma}
\DeclareMathOperator{\la}{\lambda}

 \DeclareMathOperator{\Ep}{Ep}
\DeclareMathOperator{\Ek}{Ek}

\title{The energy of random graphs
\footnote{Supported by NSFC No.10831001, PCSIRT and the ``973"
program.}}

\author{ \small Wenxue Du,~~Xueliang Li,~~Yiyang Li \\
\small Center for Combinatorics and LPMC-TJKLC\\
\small Nankai University, Tianjin 300071, P.R. China\\
\small Email: lxl@nankai.edu.cn}

\date{}

\begin{document}
\maketitle

\begin{abstract}
In 1970s, Gutman introduced the concept of the energy $\En(G)$ for a
simple graph $G$, which is defined as the sum of the absolute values
of the eigenvalues of $G$. This graph invariant has attracted much
attention, and many lower and upper bounds have been established for
some classes of graphs among which bipartite graphs are of
particular interest. But there are only a few graphs attaining the
equalities of those bounds. We however obtain an exact estimate of
the energy for almost all graphs by Wigner's semi-circle law, which
generalizes a result of Nikiforov. We further investigate the energy
of random multipartite graphs by considering a generalization of
Wigner matrix, and obtain some estimates of the energy for random
multipartite graphs.\\[3mm]
{\bf Keywords}: graph, eigenvalues, graph energy, random graph,
random matrix, empirical spectral distribution, limiting spectral
distribution.\\[3mm]
{\bf AMS Subject Classification 2000:} 15A52, 15A18, 05C80,
05C90, 92E10

\end{abstract}

\section{Introduction}

Throughout this paper, $G$ denotes a simple graph of order $n$. The
eigenvalues $\lambda_1,\ldots,\lambda_n$ of the adjacency matrix
$\mathbf{A}(G)=(a_{ij})_{n\times n}$ of $G$ are said to be the {\em
eigenvalues of the graph $G$}. In chemistry, the eigenvalues of a
molecular graph has a closed relation to the molecular orbital
energy levels of $\pi$-electrons in conjugated hydrocarbons. For the
H\"{u}chkel molecular orbital approximation, the total
$\pi$-electron energy in conjugated hydrocarbons is given by the sum
of absolute values of the eigenvalues of the corresponding molecular
graph in which the maximum degree is not more than 4 in general. In
1970s, Gutman \cite{gut} extended the concept of energy $\En(G)$ to
all simple graphs $G$, and defined that
$$\En(G)=\sum_{i=1}^n|\lambda_i|,$$
where $\lambda_1,\ldots,\lambda_n$ are the eigenvalues of $G$.
Evidently, one can immediately get the energy of a graph by
computing the eigenvalues of the graph. It is rather hard, however,
to compute the eigenvalues for a large matrix, even for a large
symmetric (0,1)-matrix like $\mathbf{A}(G)$. So many researchers
established a lot of lower and upper bounds to estimate the
invariant for some classes of graphs among which the bipartite
graphs are of particular interest. For further details, we refer the
readers to the comprehensive survey \cite{glz}. But there is a
common flaw for those inequalities that only a few graphs attain the
equalities of those bounds. Thus we can hardly see the major
behavior of the invariant $\En(G)$ for most graphs with respect to
other graph parameters ($|V(G)|$, for instance). In this paper,
however, we shall present an exact estimate of the energy for almost
all graphs by Wigner's semi-circle law. Moreover, we investigate the
energy of random multipartite graphs by employing the results on the
spectral distribution of band matrix which is a generalization of
Wigner matrix.

The structure of our article is as follows. In the next section,
we shall consider the random graphs constructed from the
classical Erd\"{o}s--R\'{e}nyi model. The second model is
concerned with random multipartite graphs which will be defined
and explored in the last section.

\section{The energy of $G_n(p)$}

In this section, we shall formulate an exact estimate of the energy
for almost all graphs by Wigner's semi-circle law.

We start by recalling the Erd\"{o}s--R\'{e}nyi model
$\mathcal{G}_{n}(p)$ (see \cite{BB}), which consists of all
graphs with vertex set $[n]=\{1,2,\ldots,n\}$ in which the edges
are chosen independently with probability $p=p(n)$. Apparently,
the adjacency matrix $\mathbf{A}(G_n(p))$ of the random graph
$G_n(p)\in\mathcal{G}_{n}(p)$ is a random matrix, and thus one
can readily evaluate the energy of $G_n(p)$ once the spectral
distribution of the random matrix $\mathbf{A}(G_n(p))$ is known.

In fact, the study on the spectral distributions of random matrices
is rather abundant and active, which can be traced back to
\cite{Wi}. We refer the readers to \cite{B, De, Me} for an overview
and some spectacular progress in this field. One important
achievement in that field is Wigner's semi-circle law which
characterizes the limiting spectral distribution of the empirical
spectral distribution of eigenvalues for a sort of random matrix.

In order to characterize the statistical properties of the wave
functions of quantum mechanical systems, Wigner in 1950s
investigated the spectral distribution for a sort of random matrix,
so-called {\em Wigner matrix},
$$\mathbf{X}_n:=(x_{ij}), ~~1\le i,j\le n,$$
which satisfies the following properties:\begin{itemize}

\item $x_{ij}$'s are independent random variables with $x_{ij}=x_{ji}$;

\item the $x_{ii}$'s have the same distribution $F_1$,
while the $x_{ij}$'s $(i\neq j)$ have the same distribution $F_2$;

\item $\V(x_{ij})=\si_2^2<\infty$ for all $1\le i< j\le n$.
\end{itemize}
We denote the eigenvalues of $\mathbf{X}_n$ by
$\la_{1,n},\la_{2,n},\ldots,\la_{n,n}$, and their empirical spectral
distribution (ESD) by
$$\Phi_{\mathbf{X}_n}(x)=
\frac{1}{n}\cdot\#\{\la_{i,n}\mid\la_{i,n}\le x, ~i=1,2,\ldots,
n\}.$$
Wigner \cite{W55,W58} considered the limiting spectral
distribution (LSD) of $\mathbf{X}_n$, and obtained his semi-circle
law.
\begin{thm}\label{Thm-1} Let $\mathbf{X}_n$ be a Wigner matrix. Then
$$\lim_{n\rightarrow\infty}\Phi_{n^{-1/2}\mathbf{X}_n}(x)=\Phi(x)
\mbox{ a.s. }$${\it i.e.,} with probability 1, the ESD
$\Phi_{n^{-1/2}\mathbf{X}_n}(x)$ converges weakly to a distribution
$\Phi(x)$ as $n$ tends to infinity, where $\Phi(x)$ has the density
$$\phi(x)=\frac{1}{2\pi\si_2^2}\sqrt{4\si_2^2- x^2
}~\mathbf{1}_{|x|\le2\si_2}.$$
\end{thm}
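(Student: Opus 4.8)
The plan is to establish the convergence by the \emph{method of moments}. Since the semicircle distribution $\Phi$ has bounded support $[-2\si_2,2\si_2]$, it is uniquely determined by its sequence of moments; consequently, to prove that $\Phi_{n^{-1/2}\mathbf{X}_n}$ converges weakly to $\Phi$ almost surely, it suffices to show that for each fixed integer $k\ge 1$ the $k$-th moment of the ESD converges almost surely to $\beta_k:=\int x^k\,d\Phi(x)$. The $k$-th moment of the ESD is nothing but a normalized trace,
$$\int x^k\,d\Phi_{n^{-1/2}\mathbf{X}_n}(x)=\frac{1}{n}\sum_{i=1}^n\Big(\frac{\la_{i,n}}{\sqrt n}\Big)^k=\frac{1}{n^{1+k/2}}\operatorname{tr}\big(\mathbf{X}_n^k\big),$$
so the whole problem reduces to the asymptotics of $n^{-(1+k/2)}\operatorname{tr}(\mathbf{X}_n^k)$.

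First I would expand the trace combinatorially as
$$\operatorname{tr}\big(\mathbf{X}_n^k\big)=\sum_{i_1,\ldots,i_k=1}^n x_{i_1i_2}x_{i_2i_3}\cdots x_{i_ki_1},$$
a sum indexed by closed walks $i_1\to i_2\to\cdots\to i_k\to i_1$ of length $k$ in the complete graph on $[n]$. Taking expectations and invoking independence together with the mean-zero assumption on the off-diagonal entries, each summand factors over the distinct edges the walk traverses, and a term survives only if every edge is used at least twice. A counting argument then shows that the leading contribution, of order $n^{1+k/2}$, comes exactly from the walks of even length $k=2m$ in which each of the $m$ distinct edges is traversed precisely twice and the edge set forms a tree on $m+1$ vertices, while the odd moments are of lower order and vanish in the limit. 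The number of such ``doubled-tree'' walks is the Catalan number $C_m=\frac{1}{m+1}\binom{2m}{m}$, the choice of $m+1$ distinct labels from $[n]$ contributes $n(n-1)\cdots(n-m)\sim n^{m+1}$, and each doubled edge contributes a factor $\E[x_{ij}^2]=\si_2^2$. Hence $n^{-(1+m)}\,\E[\operatorname{tr}(\mathbf{X}_n^{2m})]\to C_m\si_2^{2m}$, and a direct integration confirms $C_m\si_2^{2m}=\int x^{2m}\phi(x)\,dx$ with the odd moments of $\phi$ vanishing, so the expected moments converge to $\beta_k$.

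To upgrade convergence in expectation to the almost sure statement, I would estimate $\V\big(n^{-(1+k/2)}\operatorname{tr}(\mathbf{X}_n^k)\big)$ by a parallel but more delicate count over \emph{pairs} of closed walks; the independence structure forces the disconnected pairings to cancel against the square of the mean, leaving a remainder of order $O(n^{-2})$. This decay is summable in $n$, so the Borel--Cantelli lemma gives almost sure convergence of each fixed moment, and since only countably many moments are involved this promotes to almost sure weak convergence of the ESD to $\Phi$. Finally, because the theorem assumes only $\V(x_{ij})<\infty$ rather than finiteness of all moments, a preliminary \emph{truncation} step is required: one replaces $x_{ij}$ by $x_{ij}\mathbf{1}_{|x_{ij}|\le C}$, applies the moment method to the bounded entries, and shows through a rank and Hoffman--Wielandt perturbation bound that truncation disturbs the ESD negligibly as $C\to\infty$, so that the diagonal entries (distribution $F_1$) and the tails of $F_2$ influence only lower-order terms.

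I expect the main obstacle to be the combinatorial heart of the second paragraph: proving rigorously that every closed walk which revisits some edge three or more times, or whose edge set contains a cycle, spans at most $m$ vertices and hence contributes $o(n^{1+m})$, so that only the doubled-tree walks survive. Organizing the walks by their quotient graphs and carefully bounding the number of free vertex labels is where the genuine work lies; the variance estimate needed for almost sure convergence is a technically heavier but conceptually identical repetition of the same bookkeeping.
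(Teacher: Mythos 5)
The first thing to say is that the paper does not actually prove this theorem: Theorem 2.1 is quoted from Wigner \cite{W55,W58}, and Remark 2.1 merely names the moment approach as the classical route and uses it to record the extra moment-convergence facts needed later. So your sketch must be judged against the standard argument, and in its architecture it \emph{is} that argument: the trace expansion over closed walks, the reduction to walks whose edges form a doubled tree counted by Catalan numbers, the variance estimate of order $O(n^{-2})$ combined with Borel--Cantelli, and a truncation step to reach entries with only a finite second moment. All of that is the correct skeleton, and your closing paragraph correctly identifies where the combinatorial labor sits.

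There is, however, one genuine gap: you invoke ``the mean-zero assumption on the off-diagonal entries,'' but the theorem makes no such assumption --- $F_2$ may have mean $\mu_2\neq 0$, which is precisely why the paper's Remark 2.1 introduces the centered matrix $\overline{\mathbf{X}}_n=\mathbf{X}_n-\mu_1\mathbf{I}_n-\mu_2(\mathbf{J}_n-\mathbf{I}_n)$. The omission is not cosmetic, because your opening reduction (``it suffices to show moment convergence of the ESD'') is \emph{false} for the uncentered matrix: if $\mu_2\neq 0$, then $n^{-1/2}\mathbf{X}_n$ has a single outlier eigenvalue of order $\mu_2 n^{1/2}$, so $n^{-(1+m)}\E\operatorname{tr}(\mathbf{X}_n^{2m})\gtrsim \mu_2^{2m}n^{m-1}\rightarrow\infty$ for $m\ge 2$, and even the second moment tends to $\si_2^2+\mu_2^2$ rather than $\si_2^2$; the ESD still converges weakly to the semicircle, but its moments do not converge to the semicircle moments, so your walk-counting cannot be carried out on $\mathbf{X}_n$ itself. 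The repair is standard but must be explicit: run your entire moment argument on $\overline{\mathbf{X}}_n$, where it is valid as written, and then transfer the conclusion to $\mathbf{X}_n$ not through moments but through a perturbation argument --- the scalar shifts $\mu_1\mathbf{I}_n$ and $-\mu_2\mathbf{I}_n$ move every eigenvalue of the rescaled matrix by $O(n^{-1/2})$, and $\mu_2\mathbf{J}_n$ has rank one, so the rank inequality $\sup_x|\Phi_{\mathbf{A}}(x)-\Phi_{\mathbf{B}}(x)|\le \rank(\mathbf{A}-\mathbf{B})/n$ shows the two ESDs share the same weak limit. Note also that the same issue resurfaces inside your truncation step, since $x_{ij}\mathbf{1}_{|x_{ij}|\le C}$ acquires a nonzero mean even when $x_{ij}$ is centered; the rank and Hoffman--Wielandt tools you cite do suffice there, but the recentering after truncation must be performed explicitly rather than absorbed silently into ``lower-order terms.''
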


\noindent{\bf Remark 2.1.} One of classical methods to prove the
theorem above is the moment approach. Employing the method, we can
get more information about the LSD of Wigner matrix. Set $\mu_i=\int
x\hspace{2pt}dF_i$ $(i=1,2)$ and $\overline{\mathbf{X}}_n
=\mathbf{X}_n-\mu_1\mathbf{I}_n-\mu_2(\mathbf{J}_n-\mathbf{I}_n)$,
where $\mathbf{I}_n$ is the unit matrix of order $n$ and
$\mathbf{J}_n$ is the matrix of order $n$ in which all entries equal
1. It is easily seen that the random matrix
$\overline{\mathbf{X}}_n$ is a Wigner matrix as well. By means of
Theorem \ref{Thm-1}, we have
\begin{equation}\label{Equ-3}
\lim_{n\rightarrow\infty}\Phi_{n^{-1/2}\overline{\mathbf{X}}_n}(x)=\Phi(x)\mbox{
a.s.}\end{equation} Evidently, each entry of
$\overline{\mathbf{X}}_n$ has mean 0. Furthermore, one can show,
using moment approach, that for each positive integer $k$,
\begin{equation}\label{Equ-1}
\lim_{n\rightarrow\infty}\int
x^kd\Phi_{n^{-1/2}\overline{\mathbf{X}}_n}(x)=\int
x^kd\Phi(x)\mbox{ a.s.}
\end{equation}
It is interesting that the existence of the second moment of the
off-diagonal entries is the necessary and sufficient condition for
the semi-circle law, but there is no moment requirement on the
diagonal elements. For further comments on the moment approach and
Wigner's semi-circle law, we refer the readers to the extraordinary
survey by Bai \cite{B}.\vspace{6pt}

We shall say that {\em almost every (a.e.)} graph in
$\mathcal{G}_n(p)$ has a certain property $Q$ (see \cite{BB}) if the
probability that a random graph $G_n(p)$ has the property $Q$
converges to 1 as $n$ tends to infinity. Occasionally, we shall
write {\em almost all} instead of almost every. It is easy to see
that if $F_1$ is a {\em pointmass at 0}, {\it i.e.,} $F_1(x)=1$ for
$x\ge 0$ and $F_1(x)=0$ for $x<0$, and $F_2$ is the {\em Bernoulli
distribution with mean $p$}, then the Wigner matrix $\mathbf{X}_n$
coincides with the adjacency matrix $\mathbf{A}(G_n(p))$ of the
random graph $G_n(p)$. Obviously, $\si_2=\sqrt{p(1-p)}$ in this
case.

To establish the exact estimate of the energy $\En(G_n(p))$ for a.e.
graph $G_n(p)$, we first present some notions. In what follows, we
shall use $\mathbf{A}$ to denote the adjacency matrix
$\mathbf{A}(G_n(p))$ for convenience. Set
$$\overline{\mathbf{A}}=\mathbf{A}-p(\mathbf{J}_n-\mathbf{I}_n).$$
It is easy to check that each entry of $\overline{\mathbf{A}}$ has
mean 0. We define the {\em energy $\En(\mathbf{M})$ of a matrix
$\mathbf{M}$} as the sum of absolute values of the eigenvalues of
$\mathbf{M}$. By virtue of the following two lemmas, we shall
formulate an estimate of the energy $\En(\overline{\mathbf{A}})$,
and then establish the exact estimate of
$\En(\mathbf{A})=\En(G_n(p))$ by using Lemma \ref{ky}.

Let $I$ be the interval $[-1,1]$.
\begin{lem}\label{Lem-1}
Let $I^c$ be the set $\mathbb{R}\setminus I$. Then
$$\lim_{n\rightarrow\infty}\int_{I^c}
x^2d\Phi_{n^{-1/2}\overline{\mathbf{A}}}(x)= \int_{I^c}
x^2d\Phi(x)\mbox{ a.s.}$$
\end{lem}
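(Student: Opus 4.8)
The plan is to reduce the claim to the convergence of the full second moment, which is already supplied by \eqref{Equ-1}, together with a separate treatment of the contribution coming from the bounded interval $I$. Writing $x^2\mathbf{1}_{I^c}=x^2-x^2\mathbf{1}_{I}$ and integrating against $\Phi_{n^{-1/2}\overline{\mathbf{A}}}$, it suffices to prove that
\begin{equation*}
\int_{I}x^2\,d\Phi_{n^{-1/2}\overline{\mathbf{A}}}(x)\longrightarrow\int_{I}x^2\,d\Phi(x)\quad\text{a.s.},
\end{equation*}
since subtracting this from the $k=2$ instance of \eqref{Equ-1} yields exactly the asserted limit over $I^c$.

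First I would record that $\overline{\mathbf{A}}$ is itself a Wigner matrix with mean-zero, bounded entries (here $F_1$ is the pointmass at $0$ and $F_2$ is Bernoulli with mean $p$), so that both the weak convergence \eqref{Equ-3} and the moment convergence \eqref{Equ-1} hold with $\overline{\mathbf{X}}_n$ replaced by $\overline{\mathbf{A}}$. The test function $f(x)=x^2\mathbf{1}_{I}(x)$ is bounded (by $1$) and its set of discontinuities is exactly $\{-1,1\}$. Since the limiting semicircle law $\Phi$ is absolutely continuous with density $\phi$, it assigns zero mass to $\{-1,1\}$, so $\pm1$ are continuity points of $\Phi$. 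By the portmanteau theorem, the weak convergence $\Phi_{n^{-1/2}\overline{\mathbf{A}}}\Rightarrow\Phi$ upgrades to convergence of integrals against any bounded function whose discontinuity set is $\Phi$-null; applying this to $f$ gives the displayed limit over $I$.

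To handle the almost-sure qualifier cleanly, I would fix the full-probability event on which both \eqref{Equ-3} and \eqref{Equ-1} hold (the intersection of two almost-sure events) and run the deterministic argument above on that event. Combining the two pieces by subtraction then gives
\begin{equation*}
\int_{I^c}x^2\,d\Phi_{n^{-1/2}\overline{\mathbf{A}}}(x)=\int_{\mathbb{R}}x^2\,d\Phi_{n^{-1/2}\overline{\mathbf{A}}}(x)-\int_{I}x^2\,d\Phi_{n^{-1/2}\overline{\mathbf{A}}}(x)\longrightarrow\int_{I^c}x^2\,d\Phi(x)
\end{equation*}
almost surely, as required.

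I expect the only delicate point to be the passage across the discontinuity of $\mathbf{1}_{I}$ at $\pm1$: plain weak convergence controls only bounded \emph{continuous} test functions, so one must genuinely exploit that $\Phi$ has no atom at $\pm1$. If one prefers to avoid quoting the portmanteau theorem directly, the same conclusion follows by sandwiching $\mathbf{1}_{I}$ between continuous functions $f_\varepsilon^{-}\le\mathbf{1}_{I}\le f_\varepsilon^{+}$ that agree with it outside an $\varepsilon$-neighbourhood of $\{-1,1\}$, passing to the limit in $n$ via \eqref{Equ-3}, and then letting $\varepsilon\to0$ using the continuity of $\Phi$ at $\pm1$; the boundedness of $x^2$ on $I$ forces the resulting error terms to vanish.
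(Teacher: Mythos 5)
Your proof is correct, and while it shares the paper's overall skeleton --- writing $\int_{I^c} x^2\,d\Phi_{n^{-1/2}\overline{\mathbf{A}}} = \int_{\mathbb{R}} x^2\,d\Phi_{n^{-1/2}\overline{\mathbf{A}}} - \int_{I} x^2\,d\Phi_{n^{-1/2}\overline{\mathbf{A}}}$ and feeding in the full second moment from the $k=2$ case of Eq.~(\ref{Equ-1}) --- it handles the integral over $I$ by a genuinely different, and in fact sounder, mechanism. The paper's own proof posits ``the density $\phi_{n^{-1/2}\overline{\mathbf{A}}}$ of the ESD,'' asserts that it converges almost everywhere to $\phi$ as a consequence of Eq.~(\ref{Equ-3}), and then invokes the bounded convergence theorem; this is problematic as written, since the ESD is a purely atomic measure and has no density, and in any case weak convergence of distribution functions does not imply a.e.\ convergence of densities. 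Your route through the portmanteau theorem --- testing against the bounded function $x^2\mathbf{1}_{I}$, whose discontinuity set $\{-1,1\}$ is $\Phi$-null because the semicircle law is absolutely continuous --- uses only the weak convergence in Eq.~(\ref{Equ-3}), and you correctly localize the almost-sure qualifier by intersecting the two full-probability events before running the deterministic argument; the sandwich by continuous functions $f_\varepsilon^{-}\le\mathbf{1}_{I}\le f_\varepsilon^{+}$ that you sketch at the end is a self-contained substitute for portmanteau. In short: same decomposition, but a different key lemma at the crucial step, and yours repairs a real gap in the paper's justification of the convergence of $\int_{I} x^2\,d\Phi_{n^{-1/2}\overline{\mathbf{A}}}(x)$.
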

\begin{proof}[\bf Proof] Suppose
$\phi_{n^{-1/2}\overline{\mathbf{A}}}(x)$ is the density of
$\Phi_{n^{-1/2}\overline{\mathbf{A}}}(x)$. According to
Eq.(\ref{Equ-3}), with probability 1,
$\phi_{n^{-1/2}\overline{\mathbf{A}}}(x)$ converges to $\phi(x)$
almost everywhere as $n$ tends to infinity. Since $\phi(x)$ is
bounded on $I$, it follows that with probability 1,
$x^2\phi_{n^{-1/2}\overline{\mathbf{A}}}(x)$ is bounded almost
everywhere on $I$. Invoking bounded convergence theorem yields
$$\lim_{n\rightarrow\infty}\int_{I}
x^2d\Phi_{n^{-1/2}\overline{\mathbf{A}}}(x)= \int_{I}
x^2d\Phi(x)\mbox{ a.s.}$$

Combining the above fact with Eq.(\ref{Equ-1}), we have
\begin{eqnarray*}
\lim_{n\rightarrow\infty}\int_{I^c}
x^2d\Phi_{n^{-1/2}\overline{\mathbf{A}}}(x) &=&
\lim_{n\rightarrow\infty}\left(\int
x^2d\Phi_{n^{-1/2}\overline{\mathbf{A}}}(x)-\int_{I}
x^2d\Phi_{n^{-1/2}\overline{\mathbf{A}}}(x)\right)\\
&=&\lim_{n\rightarrow\infty}\int
x^2d\Phi_{n^{-1/2}\overline{\mathbf{A}}}(x)-
\lim_{n\rightarrow\infty}\int_{I}
x^2d\Phi_{n^{-1/2}\overline{\mathbf{A}}}(x) \\
&=& \int x^2d\Phi(x)-\int_{I} x^2d\Phi(x)\mbox{ a.s.}\\
&=&\int_{I^c} x^2d\Phi(x)\mbox{ a.s.}\end{eqnarray*}
\end{proof}

\begin{lem}[\bf Billingsley \cite{Bil} pp. 219]\label{Billinsley}
Let $\mu$ be a measure. Suppose that functions $a_n, b_n, f_n$
converges almost everywhere to functions $a, b, f$, respectively,
and that $a_n\le f_n\le b_n$ almost everywhere. If $\int
a_nd\mu\rightarrow\int a\hspace{2pt}d\mu$ and $\int
b_nd\mu\rightarrow\int b\hspace{2pt}d\mu$, then $\int
f_nd\mu\rightarrow\int fd\mu$.\end{lem}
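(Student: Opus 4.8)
The plan is to recognize this as a generalized form of the dominated convergence theorem, in which the single fixed dominating function of the classical statement is replaced by the two convergent envelopes $a_n$ and $b_n$. Accordingly, I would prove it by a double application of Fatou's lemma to the two nonnegative sequences furnished by the sandwich hypothesis, namely $f_n-a_n$ and $b_n-f_n$.

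First I would record the integrability bookkeeping. As in Billingsley's statement, one assumes $a_n,b_n$ integrable with $\int a_n\,d\mu\to\int a\,d\mu$ and $\int b_n\,d\mu\to\int b\,d\mu$ both finite; since $a_n\le f_n\le b_n$ almost everywhere, $f$ is trapped between the integrable $a$ and $b$ and is therefore integrable as well. This guarantees that every integral appearing below is finite, so that the splittings $\int(g-h)\,d\mu=\int g\,d\mu-\int h\,d\mu$ are legitimate.

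Next, applying Fatou's lemma to the nonnegative functions $f_n-a_n\ge 0$, and using $\liminf_n(f_n-a_n)=f-a$ almost everywhere together with $\int a_n\,d\mu\to\int a\,d\mu$, I would obtain
$$\int f\,d\mu-\int a\,d\mu=\int(f-a)\,d\mu\le\liminf_{n}\int(f_n-a_n)\,d\mu=\liminf_{n}\int f_n\,d\mu-\int a\,d\mu,$$
which, after cancelling the finite quantity $\int a\,d\mu$, yields $\int f\,d\mu\le\liminf_n\int f_n\,d\mu$. Symmetrically, applying Fatou's lemma to $b_n-f_n\ge 0$ and using $\int b_n\,d\mu\to\int b\,d\mu$ would give $\limsup_n\int f_n\,d\mu\le\int f\,d\mu$. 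Chaining the two inequalities forces $\limsup_n\int f_n\,d\mu\le\int f\,d\mu\le\liminf_n\int f_n\,d\mu$, so the limit exists and equals $\int f\,d\mu$, as claimed.

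The only real care needed—and the step I would watch most closely—is the $\liminf$/$\limsup$ bookkeeping together with the finiteness of $\int a\,d\mu$ and $\int b\,d\mu$. Fatou's lemma only permits the inequality $\int\liminf\le\liminf\int$ for \emph{nonnegative} integrands, which is precisely why one must subtract the convergent sequences $\int a_n\,d\mu$ and $\int b_n\,d\mu$ (rather than the target $\int f_n\,d\mu$), and why one must know these limits are finite before cancelling them. Beyond this, there is no substantive analytic obstacle: the result is purely a consequence of applying Fatou's lemma to the two one-sided gaps.
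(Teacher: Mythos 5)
Your proof is correct: the double application of Fatou's lemma to the nonnegative gaps $f_n-a_n$ and $b_n-f_n$, with the finiteness bookkeeping you flag before cancelling $\int a\,d\mu$ and $\int b\,d\mu$, is exactly the standard argument. The paper itself states this lemma without proof, citing Billingsley, and your argument is essentially the one given in that cited source, so there is nothing to compare beyond noting agreement.
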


We now turn to the estimate of the energy
$\En(\overline{\mathbf{A}})$. To this end, we first investigate the
convergence of $\int |x|d\Phi_{n^{-1/2}\overline{\mathbf{A}}}(x)$.
According to Eq.(\ref{Equ-3}) and the bounded convergence theorem,
we can deduce, by an argument similar to the first part of the proof
of Lemma \ref{Lem-1}, that
$$\lim_{n\rightarrow\infty} \int_{I}
|x|d\Phi_{n^{-1/2}\overline{\mathbf{A}}}(x)=\int_{I}|x|d\Phi(x)\mbox{
a.s.}$$ Obviously, $|x|\le x^2$ if $x\in I^c:=\mathbb{R}\setminus
I$. Set $a_n(x)=0,
b_n(x)=x^2\phi_{n^{-1/2}\overline{\mathbf{A}}}(x),$ and
$f_n(x)=|x|\phi_{n^{-1/2}\overline{\mathbf{A}}}(x)$. Employing
Lemmas \ref{Lem-1} and \ref{Billinsley}, we have
$$\lim_{n\rightarrow\infty}\int_{I^c}
|x|d\Phi_{n^{-1/2}\overline{\mathbf{A}}}(x)=\int_{I^c}|x|d\Phi(x)\mbox{
a.s.}$$ Consequently,
\begin{equation}\label{Convergence}
\lim_{n\rightarrow\infty}\int
|x|d\Phi_{n^{-1/2}\overline{\mathbf{A}}}(x) =\int|x|d\Phi(x)\mbox{
a.s.}\end{equation} Suppose
$\overline{\lambda}_1,\ldots,\overline{\lambda}_n$ and
$\overline{\lambda}_1',\ldots,\overline{\lambda}_n'$ are the
eigenvalues of $\overline{\mathbf{A}}$ and
$n^{-1/2}\overline{\mathbf{A}}$, respectively. Clearly,
$\sum_{i=1}^n|\overline{\lambda}_i|=n^{1/2}\sum_{i=1}^n|\overline{\lambda}_i'|$.
By Eq.(\ref{Convergence}), we can deduce that
\begin{eqnarray*}
  \En\left(\overline{\mathbf{A}}\right)/n^{3/2} &=&
  \frac{1}{n^{3/2}}\sum_{i=1}^n|\overline{\lambda}_i|\\
  &=&
  \frac{1}{n}\sum_{i=1}^n|\overline{\lambda}_i'|\\
  &=& \int |x|d\Phi_{n^{-1/2}\overline{\mathbf{A}}}(x)\\
  &\rightarrow& \int
  |x|d\Phi(x)\mbox{ a.s. } (n\rightarrow\infty)\\
  &=&\frac{1}{2\pi\si_2^2}\int_{-2\si_2}^{2\si_2}
  |x|\sqrt{4\si_2^2-x^2}~dx\\
  &=&
  \frac{8}{3\pi}\si_2
  =\frac{8}{3\pi}\sqrt{p(1-p)}.
 \end{eqnarray*}
Therefore, the energy $\En\left(\overline{\mathbf{A}}\right)$
enjoys a.s. the equation as follows:
$$\En\left(\overline{\mathbf{A}}\right)
=n^{3/2}\left(\frac{8}{3\pi}\sqrt{p(1-p)}+o(1)\right).
$$
We proceed to investigating $\En(\mathbf{A})=\En(G_n(p))$ and
present the following result due to Fan.
\begin{lem}[\bf Fan \cite{ky}]\label{ky}
Let $\mathbf{X},\mathbf{Y},\mathbf{Z}$ be real symmetric matrices of
order $n$ such that $\mathbf{X}+\mathbf{Y}=\mathbf{Z}$. Then
$$\sum_{i=1}^n |\lambda_i(\mathbf{X})|+\sum_{i=1}^n
|\lambda_i(\mathbf{Y})| \geq\sum_{i=1}^n |\lambda_i(\mathbf{Z})|$$
where $\lambda_i(\mathbf{M})$ $(i=1,\cdots,n)$ are the eigenvalues
of the matrix $\mathbf{M}$.
\end{lem}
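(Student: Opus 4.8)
The plan is to recognize that $\sum_{i=1}^n|\lambda_i(\mathbf{M})|$ is exactly the trace norm of the real symmetric matrix $\mathbf{M}$, and to prove the stated triangle-type inequality through a dual (variational) description of this quantity that is \emph{linear} in $\mathbf{M}$. Concretely, I would first establish that for every real symmetric matrix $\mathbf{M}$ of order $n$,
$$\sum_{i=1}^n|\lambda_i(\mathbf{M})|=\max\left\{\mathrm{tr}(\mathbf{S}\mathbf{M})\ :\ \mathbf{S}=\mathbf{S}^{\mathrm{T}},\ \|\mathbf{S}\|\le1\right\},$$
where $\|\mathbf{S}\|$ is the spectral norm, i.e.\ the largest absolute value of an eigenvalue of $\mathbf{S}$. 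Granting this identity, the lemma drops out in one line: choose $\mathbf{S}_0$ attaining the maximum for $\mathbf{Z}$, so that by linearity of the trace
$$\sum_{i=1}^n|\lambda_i(\mathbf{Z})|=\mathrm{tr}(\mathbf{S}_0\mathbf{Z})=\mathrm{tr}(\mathbf{S}_0\mathbf{X})+\mathrm{tr}(\mathbf{S}_0\mathbf{Y}),$$
and then bound each of the two summands by applying the same variational formula (as an upper bound) to $\mathbf{X}$ and to $\mathbf{Y}$ separately, which yields $\sum_i|\lambda_i(\mathbf{X})|+\sum_i|\lambda_i(\mathbf{Y})|$.

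Thus the real work is to verify the variational formula, and I would do so by checking the two inequalities. For the bound $\mathrm{tr}(\mathbf{S}\mathbf{M})\le\sum_j|\lambda_j(\mathbf{M})|$ valid for all admissible $\mathbf{S}$, diagonalize $\mathbf{M}=\sum_j\lambda_j u_ju_j^{\mathrm{T}}$ with orthonormal eigenvectors $u_j$ and compute $\mathrm{tr}(\mathbf{S}\mathbf{M})=\sum_j\lambda_j\,(u_j^{\mathrm{T}}\mathbf{S}u_j)$; since each $u_j$ is a unit vector and $\|\mathbf{S}\|\le1$, we have $|u_j^{\mathrm{T}}\mathbf{S}u_j|\le1$, so the sum is at most $\sum_j|\lambda_j|$. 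For achievability, take the matrix sign $\mathbf{S}_0=\sum_j\mathrm{sign}(\lambda_j)\,u_ju_j^{\mathrm{T}}$, which is symmetric with all eigenvalues in $\{-1,0,1\}$, hence admissible, and satisfies $\mathrm{tr}(\mathbf{S}_0\mathbf{M})=\sum_j|\lambda_j|$ by orthonormality of the $u_j$.

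I do not expect a serious obstacle, as every step is elementary; the one conceptual point to get right is precisely \emph{why} a dual description helps. The inequality asserts subadditivity, and the hypothesis $\mathbf{X}+\mathbf{Y}=\mathbf{Z}$ is additive, so the natural strategy is to represent the otherwise nonlinear function $\mathbf{M}\mapsto\sum_i|\lambda_i(\mathbf{M})|$ as a maximum of linear functionals $\mathbf{M}\mapsto\mathrm{tr}(\mathbf{S}\mathbf{M})$; the single optimizer $\mathbf{S}_0$ for $\mathbf{Z}$ then transfers cleanly across the additive splitting. An alternative avoiding duality would use the Jordan decomposition $\mathbf{M}=\mathbf{M}^{+}-\mathbf{M}^{-}$ into its positive and negative semidefinite parts together with $\sum_i|\lambda_i(\mathbf{M})|=\mathrm{tr}(\mathbf{M}^{+}+\mathbf{M}^{-})$, but reconciling the three separate decompositions of $\mathbf{X},\mathbf{Y},\mathbf{Z}$ is messier than the one-optimizer argument, so I would keep the variational route.
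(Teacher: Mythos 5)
Your proof is correct, and there is in fact nothing in the paper to compare it against: the authors state the lemma as a black box, citing Fan's 1951 paper \cite{ky}, and never prove it. Every step of your argument checks out. The dual identity $\sum_{i}|\lambda_i(\mathbf{M})|=\max\{\mathrm{tr}(\mathbf{S}\mathbf{M}):\mathbf{S}=\mathbf{S}^{\mathrm{T}},\ \|\mathbf{S}\|\le 1\}$ is valid for real symmetric $\mathbf{M}$: the upper bound follows from $\mathrm{tr}(\mathbf{S}\mathbf{M})=\sum_j\lambda_j\,u_j^{\mathrm{T}}\mathbf{S}u_j$ together with $|u_j^{\mathrm{T}}\mathbf{S}u_j|\le\|\mathbf{S}\|\le 1$ (the quadratic form of a symmetric contraction on a unit vector lies in $[-1,1]$), and attainment by the sign matrix $\mathbf{S}_0=\sum_j\mathrm{sign}(\lambda_j)u_ju_j^{\mathrm{T}}$ uses only orthonormality of the eigenbasis; subadditivity then transfers across $\mathbf{X}+\mathbf{Y}=\mathbf{Z}$ because the optimizer for $\mathbf{Z}$ is merely feasible for $\mathbf{X}$ and $\mathbf{Y}$. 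For context, your route is the streamlined symmetric-matrix form of the duality underlying Fan's original derivation: Fan worked from his maximum principle, expressing sums of the $k$ largest eigenvalues (more generally, sums of singular values) as maxima of trace functionals over orthonormal frames, from which the triangle inequality for what is now called the trace norm follows; your single-contraction formulation collapses that machinery to exactly what the lemma needs, at the mild cost of being special to the symmetric case (which is all the paper uses). Your closing remark is also sound: the Jordan-decomposition alternative via $\mathbf{M}=\mathbf{M}^{+}-\mathbf{M}^{-}$ does work, but the three decompositions of $\mathbf{X},\mathbf{Y},\mathbf{Z}$ interact awkwardly, whereas your argument is linear in the matrix variable and needs only one optimizer.
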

It is not difficult to verify that the eigenvalues of the matrix
$\mathbf{J}_n-\mathbf{I}_n$ are $n-1$ and $-1$ of $n-1$ times.
Consequently $\En(\mathbf{J}_n-\mathbf{I}_n)=2(n-1)$. One can
readily see that
$\En\big(p(\mathbf{J}_n-\mathbf{I}_n)\big)=p\En(\mathbf{J}_n-\mathbf{I}_n)$.
Thus,
$$\En\big(p(\mathbf{J}_n-\mathbf{I}_n)\big)=2p(n-1).$$ Since
$\mathbf{A}=\overline{\mathbf{A}}+p(\mathbf{J}_n-\mathbf{I}_n)$,
it follows from Lemma \ref{ky} that with probability 1,
\begin{eqnarray*}\En(\mathbf{A})
&\le&\En\left(\overline{\mathbf{A}}\right)+\En(p(\mathbf{J}_n-\mathbf{I}_n))\\
&=&n^{3/2}\left(\frac{8}{3\pi}\sqrt{p(1-p)}+o(1)\right)+2p(n-1).
\end{eqnarray*}Consequently,
\begin{equation}\label{Equ-4}\lim_{n\rightarrow\infty}\En(\mathbf{A})/n^{3/2}
\le\frac{8}{3\pi}\sqrt{p(1-p)}\mbox{ a.s.}
\end{equation}
On the other hand, since
$\overline{\mathbf{A}}=\mathbf{A}+p\big(-(\mathbf{J}_n-\mathbf{I}_n)\big)$,
we can deduce by Lemma \ref{ky} that with probability 1,
\begin{eqnarray*}\En(\mathbf{A})
&\ge&\En\left(\overline{\mathbf{A}}\right)
-\En\left(p\big(-(\mathbf{J}_n-\mathbf{I}_n)\big)\right)\\
&=&\En\left(\overline{\mathbf{A}}\right)-\En(p(\mathbf{J}_n-\mathbf{I}_n))\\
&=&n^{3/2}\left(\frac{8}{3\pi}\sqrt{p(1-p)}+o(1)\right)-2p(n-1).
\end{eqnarray*}
Consequently,
\begin{equation}\label{Equ-5}\lim_{n\rightarrow\infty}\En(\mathbf{A})/n^{3/2}
\ge\frac{8}{3\pi}\sqrt{p(1-p)}\mbox{ a.s.}\end{equation} Combining
Ineq.(\ref{Equ-4}) with Ineq.(\ref{Equ-5}), we have
$$\En(\mathbf{A})
=n^{3/2}\left(\frac{8}{3\pi}\sqrt{p(1-p)}+o(1)\right)\mbox{
a.s.}$$ Recalling that $\mathbf{A}$ is the adjacency matrix of
$G_n(p)$, we thus obtain that a.e. random graph $G_n(p)$ enjoys
the equation as follows:
$$\En(G_n(p))
=n^{3/2}\left(\frac{8}{3\pi}\sqrt{p(1-p)}+o(1)\right).
$$

\noindent{\bf Remark 2.2}. Note that for $p=\frac 1 2$, Nikiforov in
\cite{N} got the above equation. Here, our result is for any
probability $p$, which could be seen as a generalization of his
result.

\section{The energy of the random multipartite graph}

We begin with the definition of the random multipartite graph. We
use $K_{n;\nu_1,\ldots,\nu_m}$ to denote the complete $m$-partite
graph with vertex set $[n]$ whose parts $V_1,\ldots,V_m$ ($m=m(n)\ge
2$) are such that $|V_i|=n\nu_i=n\nu_i(n)$, $i=1,\ldots,m$. Let
$\mathcal{G}_{n;\nu_1\ldots\nu_m}(p)$ be the set of random
$m$-partite graphs with vertex set $[n]$ in which the edges are
chosen independently with probability $p$ from the set of edges of
$K_{n;\nu_1,\ldots,\nu_m}$. We further introduce two classes of
random $m$-partite graphs. Denote by $\mathcal{G}_{n,m}(p)$ and
$\mathcal{G}'_{n,m}(p)$, respectively, the sets of random
$m$-partite graphs satisfy, respectively, the following conditions:
\begin{equation}\label{Condition}
\lim_{n\rightarrow\infty}\max\{\nu_1(n),\ldots,\nu_m(n)\}>0\mbox{
and }
 \lim_{n\rightarrow\infty}\frac{\nu_i(n)}{\nu_j(n)}=1.
\end{equation} and
\begin{equation}\label{con2}
  \lim_{n\rightarrow\infty}\max\{\nu_1(n),\ldots,\nu_m(n)\}=0.
\end{equation}

One can easily see that to obtain the estimate of the energy for the
random  multipartite graph
$G_{n;\nu_1\ldots\nu_m}(p)\in\mathcal{G}_{n;\nu_1\ldots\nu_m}(p)$,
we need to investigate the spectral distribution of the random
matrix $\mathbf{A}(G_{n;\nu_1\ldots\nu_m}(p))$. It is not difficult
to verify that $\mathbf{A}(G_{n;\nu_1\ldots\nu_m}(p))$ would be a
special case of a random matrix $\mathbf{X}_n(\nu_1,\ldots,\nu_m)$
(or $\mathbf{X}_{n,m}$ for short) called a {\em random multipartite
matrix} which satisfies the following properties:\begin{itemize}

\item $x_{ij}$'s are independent random variables with $x_{ij}=x_{ji}$;

\item the $x_{ij}$'s have the same distribution $F_1$ if
$i$ and $j\in V_k$, while the $x_{ij}$'s have the same distribution
$F_2$ if $i\in V_k$ and $j\in [n]\setminus V_k$, where
$V_1,\ldots,V_m$ are the parts of $K_{n;\nu_1,\ldots,\nu_m}$ and $k$
is an integer with $1\le k\le m$;

\item $|x_{ij}|\le K$ for some constant $K$.
\end{itemize}
Apparently, if $F_1$ is a  pointmass at 0 and $F_2$ is a Bernoulli
distribution with mean $p$, then the random matrix
$\mathbf{X}_{n,m}$ coincides with the adjacency matrix
$\mathbf{A}(G_{n;\nu_1\ldots\nu_m}(p))$. Thus, we can readily
evaluate the energy $\En(G_{n;\nu_1\ldots\nu_m}(p))$ once we obtain
the spectral distribution of $\mathbf{X}_{n,m}$. In fact, the random
matrix $\mathbf{X}_{n,m}$ is a special case of the random matrix
considered by Anderson and Zeitouni \cite{AZ} in a rather general
setting called the band matrix model which can be regarded as one of
generalization of the Wigner matrix, and we shall employ their
results to deal with the spectral distribution of
$\mathbf{X}_{n,m}$.

The rest of this section will be divided into two parts. In the
first part, we shall present, respectively, exact estimates of the
energies for random graphs $G_{n,m}(p)\in\mathcal{G}_{n,m}(p)$ and
$G'_{n,m}(p)\in\mathcal{G}'_{n,m}(p)$ by exploring the spectral
distribution of the band matrix. We establish lower and upper bounds
of the energy for the random multipartite graph
$G_{n;\nu_1\ldots\nu_m}(p)$, and moreover we obtain an exact
estimate of the energy for the random bipartite graph
$G_{n;\nu_1,\nu_2}(p)$ in the second part.

\subsection{The energy of $G_{n,m}(p)$ and $G'_{n,m}(p)$}

In this part, we shall formulate exact estimates of the energies for
random graphs $G_{n,m}(p)$ and $G'_{n,m}(p)$, respectively. For this
purpose, we shall establish the following theorem. To state our
result, we first present some notations. Let
$\mathbf{I}_{n,m}=(i_{p,q})_{n\times n}$ be a {\em quasi-unit
matrix} such that
\begin{eqnarray*} i_{p,q}=\left\{
\begin{array}{ll} 1 &\mbox{ if }p,q\in V_k,\\
0 &\mbox{ if }p\in V_k\mbox{ and }q\in[n]\setminus
V_k,\end{array}\right.
\end{eqnarray*}
where $V_1,\ldots,V_m$ are the parts of $K_{n;\nu_1,\ldots,\nu_m}$
and $k$ is an integer with $1\le k\le m$. Set $\mu_i=\int
x\hspace{2pt}dF_i$ $(i=1,2)$ and
$$\overline{\mathbf{X}}_{n,m}
=\mathbf{X}_{n,m}-\mu_1\mathbf{I}_{n,m}-
\mu_2(\mathbf{J}_{n}-\mathbf{I}_{n,m}).
$$
Evidently, $\overline{\mathbf{X}}_{n,m}$ is a random multipartite
matrix as well in which each entry has mean 0. To make our statement
concise, we define $\Delta^2=(\si_1^2+(m-1)\si_2^2)/m$.
\begin{thm}\label{Main Thm}~
\begin{itemize}

\item[\rm (i)] If condition (\ref{Condition}) holds, then
$$\Phi_{n^{-1/2}\overline{\mathbf{X}}_{n,m}}(x)
\rightarrow_P\Psi(x)\mbox{ as }n\rightarrow\infty
$$
{\it i.e.,} the ESD $\Phi_{n^{-1/2}\overline{\mathbf{X}}_{n,m}}(x)$
converges weakly to a distribution $\Psi(x)$ in probability as $n$
tends to infinity where $\Psi(x)$ has the density
$$\psi(x)=
\frac{1}{2\pi\Delta^2} \sqrt{4\Delta^2-x^2}~\mathbf{1}_{|x|\le
2\Delta}.
$$

\item[\rm (ii)] If condition (\ref{con2}) holds, then
$\Phi_{n^{-1/2}\overline{\mathbf{X}}_{n,m}}(x) \rightarrow_P\Phi(x)$
as $n\rightarrow\infty$.
 \end{itemize}
 \end{thm}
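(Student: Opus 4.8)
The plan is to establish the limiting spectral distribution of $n^{-1/2}\overline{\mathbf{X}}_{n,m}$ via the moment method, showing convergence of all moments to those of the prescribed semi-circle laws. I would like to sketch how I would prove this theorem.
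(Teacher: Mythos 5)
Your proposal is a declaration of intent, not a proof: it names the moment method and then stops, so there is no argument to check. Everything that makes this theorem nontrivial is absent. Concretely, a moment-method proof would require you to (a) compute $\E\bigl[\frac{1}{n}\mathrm{tr}\,(n^{-1/2}\overline{\mathbf{X}}_{n,m})^{k}\bigr]$ by enumerating closed walks of length $k$ on $[n]$, keeping track of which edges of the walk join two vertices in the same part $V_j$ (second moment $\si_1^2$) and which cross parts (second moment $\si_2^2$); (b) show that under condition (\ref{Condition}), where $\nu_i(n)\to 1/m$ for each $i$, the effective per-edge variance averages to $\Delta^2=(\si_1^2+(m-1)\si_2^2)/m$, whereas under condition (\ref{con2}), where $\max_i\nu_i(n)\to 0$, the within-part contribution is asymptotically negligible and only $\si_2^2$ survives --- this dichotomy is the entire content of the two cases, and your sketch does not even mention how the partition structure enters the walk counting; and (c) establish a variance or fourth-moment bound on the normalized traces to upgrade convergence of expected moments to convergence in probability, plus the Carleman-type step identifying the limit moments with the Catalan numbers scaled by $\Delta^{2k/2}$ (resp.\ $\si_2^{k}$). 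None of these steps appears, and step (b) in particular is where a naive application of Wigner's theorem fails, since $\overline{\mathbf{X}}_{n,m}$ is not a Wigner matrix when $\si_1\neq\si_2$.

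For comparison, the paper sidesteps the tedious enumeration entirely by invoking the Anderson--Zeitouni band-matrix theorem (Lemma \ref{Thm 3.5}): since $\overline{\mathbf{X}}_{n,m}$ is a band matrix with color measure $\theta_m(c_i)=\nu_i(n)$, the whole proof reduces to verifying the normalization $\int s^{(2)}(c,c')\theta(dc')\equiv 1$ after rescaling --- by $\Delta^{-1}$ in case (i), using $\theta_m(c_i)\to 1/m$ so that the integral equals $\Delta^{-2}\bigl(\si_1^2/m+(m-1)\si_2^2/m\bigr)=1$, and by $\si_2^{-1}$ in case (ii), using $\theta(\{c\})=0$ so that the diagonal color contributes nothing and the integral equals $\theta(\mathcal{C}\setminus\{c\})=1$. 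The moment route you name is in fact viable (the paper's Remark 3.1 notes it would even give almost-sure convergence rather than convergence in probability), but as submitted your proposal carries out none of it; you need either the full combinatorial computation sketched above or an appeal to a result of Anderson--Zeitouni type together with the verification of its hypothesis.
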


Our theorem can be proved by a result established by Anderson and
Zeitouni \cite{AZ}. We begin with a succinct introduction of the
band matrix model defined by Anderson and Zeitouni in \cite{AZ},
from which one can readily see that a random multipartite matrix is
a band matrix.

We fix a non-empty set $\mathcal{C}=\{c_1,c_2,\ldots,c_m\}$ which is
finite or countably infinite. The elements of $\mathcal{C}$ are
called {\em colors}. Let $\kappa$ be a surjection from $[n]$ to the
color set $\mathcal{C}$, and we say that $\kappa(i)$ is the color of
$i$. Naturally, we can obtain a partition $V_1,\ldots,V_m$ of $[n]$
according to the colors of its elements, {\it i.e.,} two elements
$i$ and $i'$ in $[n]$ belong to the same part $V_j$ if and only if
their colors are identical. We next define the probability measure
$\theta_m$ on the color set as follows:
$$\theta_m(C)=\theta_{m(n)}(C)=|\kappa^{-1}(C)|/n, 1\le i\le m=m(n),$$
where $C\subseteq\mathcal{C}$ and
$\kappa^{-1}(C)=\{x\in[n]:\kappa(x)\in C\}$. Evidently, the
probability space $(\mathcal{C},2^{\mathcal{C}},\theta_m)$ is a
discrete probability space. Set
$$\theta=\lim_{n\rightarrow\infty}\theta_m.$$

For each positive integer $k$ we fix a bounded nonnegative function
$d^{(k)}$ on color set and a symmetric bounded nonnegative function
$s^{(k)}$ on the product of two copies of the color set. We make the
following assumptions:
\begin{itemize}

\item  $d^{(k)}$ is constant for $k\neq 2$;

\item $s^{(k)}$ is constant for $k\notin\{2, 4\}$;

\end{itemize}
Let $\{\xi_{ij}\}_{i,j=1}^n$ be a family of independent real-valued
mean zero random variables. We suppose that for all $1\le i,j\le n$
and positive integers $k$,
$$\label{Moments}
 \E(|\xi_{ij}|^k)\le \left\{\begin{array}{ll}
 s^{(k)}(\kappa(i),\kappa(j)) & \mbox{if }i\neq j,\\
 d^{(k)}(\kappa(i)) & \mbox{if }i= j,
 \end{array}\right.
$$
and moreover we assume that equality holds above whenever one of the
following conditions holds:
\begin{itemize}

\item $k = 2$,

\item $i\neq j$ and $k=4$.

\end{itemize}
In other words, the rule is to enforce equality whenever the
not-necessarily-constant functions $d^{(2)}, s^{(2)}$ or $s^{(4)}$
are involved, but otherwise merely to impose a bound.

We are now ready to present the random symmetric matrix
$\mathbf{Y}_n$ called {\it band matrix} in which the entries are
the r.v. $\xi_{ij}$. Evidently, $\mathbf{Y}_n$ is the same as
$\overline{\mathbf{X}}_{n,m}$ providing
\begin{equation}\label{Second moment}
s^{(2)}(\kappa(i),\kappa(j))= \left\{\begin{array}{ll} \si_1^2 &
\mbox{ if } \kappa(i)=\kappa(j)\\
\si_2^2 &\mbox{ if }
\kappa(i)\neq\kappa(j)\\
\end{array}\right. \mbox{ and } d^{(2)}(\kappa(i))=\si_1^2, 1\le i,j\le
n.
\end{equation}
So the random multipartite matrix $\overline{\mathbf{X}}_{n,m}$
is a special case of the band matrix $\mathbf{Y}_n$.

Define the standard semi-circle distribution $\Phi_{0,1}$ of zero
mean and unit variance to be the measure on the real set of compact
support with density
$\phi_{0,1}(x)=\frac{1}{2\pi}\sqrt{4-x^2}~\mathbf{1}_{|x|\le 2}$.
Anderson and Zeitouni investigated the LSD of $\mathbf{Y}_n$ and
proved the following result (Theorem 3.5 in \cite{AZ}).

\begin{lem}[\bf Anderson and Zeitouni \cite{AZ}]\label{Thm 3.5}
If $\int s^{(2)}(c,c')\theta(dc')\equiv 1$, then
$\Phi_{n^{-1/2}\mathbf{Y}_{n}}(x)$ converges weakly to the standard
semi-circle distribution $\Phi_{0,1}$ in probability as $n$ tends to
infinity. \end{lem}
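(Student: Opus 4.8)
The plan is to prove the semicircle limit by the method of moments, tracking the color-dependent weights carefully; this is the natural self-contained route and parallels the moment approach surveyed in \cite{B}. Since the standard semicircle distribution $\Phi_{0,1}$ has compact support, it is uniquely determined by its moments (Carleman's condition holds trivially), so weak convergence in probability of the empirical spectral distribution will follow once I show, for every fixed positive integer $k$, that the $k$-th empirical moment
$$m_{k,n}:=\int x^k\,d\Phi_{n^{-1/2}\mathbf{Y}_n}(x)=\frac{1}{n}\,\mathrm{Tr}\!\left((n^{-1/2}\mathbf{Y}_n)^k\right)$$
converges in probability to the $k$-th moment of $\Phi_{0,1}$: namely $0$ for odd $k$, and the Catalan number $C_{k/2}=\frac{1}{k/2+1}\binom{k}{k/2}$ for even $k$. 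I would split this into (a) convergence of the expectation $\E[m_{k,n}]$ to the semicircle moment, and (b) the variance bound $\V[m_{k,n}]\to 0$; together with Chebyshev's inequality these give convergence in probability of each $m_{k,n}$.

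For step (a), expand the trace as a sum over closed walks of length $k$ on $[n]$:
$$\E[m_{k,n}]=n^{-1-k/2}\sum_{i_1,\ldots,i_k=1}^{n}\E\big(\xi_{i_1i_2}\xi_{i_2i_3}\cdots\xi_{i_ki_1}\big).$$
Because the $\xi_{ij}$ are independent with mean zero, a term survives only if every edge $\{i_a,i_{a+1}\}$ of the walk is traversed at least twice; hence odd $k$ contributes nothing, since an odd closed walk cannot have every edge of even multiplicity. For even $k=2p$, the dominant terms are the walks whose edge multiset forms a tree on $p+1$ distinct vertices with each edge traversed exactly twice; these are in bijection with rooted plane trees, $C_p$ in shape, each admitting $n(n-1)\cdots(n-p)=n^{p+1}(1+o(1))$ vertex labelings. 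All remaining walks use at most $p$ distinct vertices and hence number $O(n^p)=o(n^{p+1})$; since the moment bounds $\E|\xi_{ij}|^k\le s^{(k)}\le K$ keep every weight uniformly bounded, their total contribution vanishes after the $n^{-1-p}$ normalization.

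The coloring enters only through the surviving tree walks, and this is exactly where the hypothesis $\int s^{(2)}(c,c')\,\theta(dc')\equiv 1$ is decisive. Each doubled tree edge $\{i,j\}$ contributes $\E(\xi_{ij}^2)=s^{(2)}(\kappa(i),\kappa(j))$, since equality is enforced at $k=2$. Summing the labeling over the tree from the root outward, the root contributes a free factor $n$, while each newly introduced leaf $j$ attached to an already-placed vertex $i$ contributes $\sum_{j}s^{(2)}(\kappa(i),\kappa(j))=n\int s^{(2)}(\kappa(i),c')\,\theta_m(dc')$. The $p+1$ factors of $n$ cancel the normalization $n^{-1-p}$, leaving the product of the $p$ averages $\int s^{(2)}(\kappa(i),c')\,\theta_m(dc')$, each of which tends to $\int s^{(2)}(\kappa(i),c')\,\theta(dc')=1$ \emph{uniformly in the color $\kappa(i)$} by the hypothesis. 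Thus the color dependence washes out and $\E[m_{2p,n}]\to C_p$, matching the semicircle moment exactly.

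For step (b), I would bound $\V[m_{k,n}]=\E[m_{k,n}^2]-(\E[m_{k,n}])^2$ by expanding $m_{k,n}^2$ as a sum over \emph{pairs} of closed walks and comparing with the product of separate expectations. The contributions that do not cancel are precisely those in which the two walks share at least one edge, forcing the union multigraph to be connected; a standard counting argument shows such jointly constrained configurations span at most $k$ distinct vertices, giving $O(n^{k})$ labelings against the normalization $n^{-2-k}$, so that $\V[m_{k,n}]=O(n^{-2})\to 0$. Chebyshev's inequality then yields $m_{k,n}\to$ (the semicircle moment) in probability for each $k$, and the moment-determinacy of $\Phi_{0,1}$ upgrades this to the claimed weak convergence in probability of the ESD. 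I expect the main obstacle to be the combinatorial bookkeeping of step (a): one must verify that the doubled-tree walks are the only ones surviving the $n^{-1-p}$ normalization and that the \emph{uniformity} in the hypothesis is exactly what makes the limiting moments independent of the coloring. The variance estimate of step (b), while notationally heavier, is routine by comparison.
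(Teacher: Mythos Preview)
The paper does not prove this lemma itself; it is quoted as Theorem~3.5 of Anderson--Zeitouni \cite{AZ}, and Remark~3.1 immediately following it notes that their argument is ``a combinatorial enumeration scheme for the different types of terms that contribute to the expectation of products of traces of powers of the matrices,'' i.e., precisely the moment method you outline. Your proposal is therefore correct and follows essentially the same approach the paper attributes to \cite{AZ} (and to the survey \cite{B}).
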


\noindent{\bf Remark 3.1.} The main approach employed by Anderson
and Zeitouni to prove the assertion is a combinatorial enumeration
scheme for the different types of terms that contribute to the
expectation of products of traces of powers of the matrices. It is
worthwhile to point out that by an analogous method called moment
approach one can readily obtain a stronger assertion for
$\overline{\mathbf{X}}_{n,m}$ that the convergence could be valid
with probability 1. Moreover, one can show that for each positive
integer $k$,
\begin{equation}\label{Equ-6}
\lim_{n\rightarrow\infty} \int
x^k\Phi_{n^{-1/2}\overline{\mathbf{X}}_{n}}(x)=\left\{
\begin{array}{ll}
\displaystyle\int x^k\Psi(x)\mbox{ a.s.}&\mbox{ if condition
(\ref{Condition}) holds,}\\
\displaystyle\int x^k\Phi(x)\mbox{ a.s.}&\mbox{ if condition
(\ref{con2}) holds.}\end{array}\right.
\end{equation}
However, we shall not present the proof of Eq.(\ref{Equ-6}) here
since the arguments of the two methods are similar and the
calculation of the moment approach is rather tedious. We refer the
readers to Bai's survey \cite{B} for details.\vspace{6pt}

Using Lemma \ref{Thm 3.5}, to prove Theorem \ref{Main Thm}, we just
need to verify $\int s^{(2)}(c,c')\theta(dc')\equiv 1$. For Theorem
\ref{Main Thm}(i), we consider the matrix
$\Delta^{-1}\overline{\mathbf{X}}_{n,m}$ where
$\Delta^2=(\si_1^2+(m-1)\si_2^2)/m$. Note that condition
(\ref{Condition}) implies that $\theta_m(c_i)\rightarrow1/m$ as
$n\rightarrow\infty$, $1\le i\le m$. By means of condition
(\ref{Second moment}), one can readily see that for the random
matrix $\Delta^{-1}\overline{\mathbf{X}}_{n,m}$,
$$\int s^{(2)}(c,c')\theta(dc')=
\frac{1}{\Delta^2}\left(\frac{\si_1^2}{m}+
 \frac{(m-1)\si_2^2}{m}\right) \equiv1.$$
Consequently, Lemma \ref{Thm 3.5} implies that
$$\Phi_{n^{-1/2}\Delta^{-1}\overline{\mathbf{X}}_{n,m}}\rightarrow_P
\Phi_{0,1}\mbox{ as }n\rightarrow\infty.
$$
Therefore,
$$\Phi_{n^{-1/2}\overline{\mathbf{X}}_{n,m}}\rightarrow_P
\Psi(x)\mbox{ as }n\rightarrow\infty,
$$
and thus the first part
of Theorem \ref{Main Thm} follows.

For the second part of Theorem \ref{Main Thm}, we consider the
matrix $\si_2^{-1}\overline{\mathbf{X}}_{n,m}$. Note that condition
(\ref{con2}) implies that
$\theta(c_i)=\lim_{n\rightarrow\infty}\theta_m(c_i)=\lim_{n\rightarrow\infty}\nu_i(n)=0$,
$1\le i\le m$. By virtue of condition (\ref{Second moment}), if
$c\neq c'$ then $s^{(2)}(c,c')=1$. Consequently, for the random
matrix $\si_2^{-1}\overline{\mathbf{X}}_{n,m}$, we have
\begin{eqnarray*}
 \int s^{(2)}(c,c')\theta(dc')
 &=& \int
 s^{(2)}(c,c')\chi_{_{\mathcal{C}\setminus\{c\}}}\theta(dc')\\
 &=& \int \chi_{_{\mathcal{C}\setminus\{c\}}}\theta(dc')\\
 &=&\theta(\mathcal{C}\setminus\{c\})\equiv1.
\end{eqnarray*}
As a result, Lemma \ref{Thm 3.5} implies that
$$\Phi_{n^{-1/2}\sigma_2^{-1}\overline{\mathbf{X}}_{n,m}}\rightarrow_P
\Phi_{0,1}\mbox{ as }n\rightarrow\infty.$$ Therefore,
$$\Phi_{n^{-1/2}\overline{\mathbf{X}}_{n,m}}\rightarrow_P
\Phi(x)\mbox{ as }n\rightarrow\infty,$$ and thus the second part
follows.

We now employ Theorem \ref{Main Thm} to estimate the energy of
$G_{n;\nu_1\ldots\nu_m}(p)$ under condition (\ref{Condition}) or
(\ref{con2}). For convenience, we shall use $\mathbf{A}_{n,m}$ to
denote the adjacency matrix $\mathbf{A}(G_{n,m}(p))$. One can
readily see that if a random multipartite matrix $\mathbf{X}_{n,m}$
satisfies condition (\ref{Condition}), and $F_1$ is a pointmass at 0
and $F_2$ is a Bernoulli distribution with mean $p$, then
$\mathbf{X}_{n,m}$ coincides with the adjacency matrix
$\mathbf{A}_{n,m}$. Set
\begin{equation}\label{Equ-7}
\overline{\mathbf{A}}_{n,m}=\mathbf{A}_{n,m}
-p(\mathbf{J}_n-\mathbf{I}_{n,m})\end{equation} where
$\mathbf{I}_{n,m}$ is the quasi-unit matrix whose parts are the same
as $\mathbf{A}_{n,m}$. Evidently, each entry of
$\overline{\mathbf{A}}_{n,m}$ has mean 0. It follows from the first
part of Theorem \ref{Main Thm} that
$$\Phi_{n^{-1/2}\overline{\mathbf{A}}_{n,m}}\rightarrow_P
\Psi(x)\mbox{ as }n\rightarrow\infty.
$$
Since the density of $\Psi(x)$ is bounded with the finite support,
we can use a similar method for showing Eq.(\ref{Convergence}) to
prove that
$$\int |x|d\Phi_{n^{-1/2}\overline{\mathbf{A}}_{n,m}}(x)
\rightarrow_P\int |x|d\Psi(x)\mbox{ as } n\rightarrow\infty.$$
Consequently,
\begin{eqnarray*}
  \En\left(\overline{\mathbf{A}}_{n,m}\right)/n^{3/2}
  &=& \int |x|d\Phi_{n^{-1/2}\overline{\mathbf{A}}_{n,m}}(x)\\
  &\rightarrow_P& \int
  |x|d\Psi(x)\mbox{ as } n\rightarrow\infty\\
  &=&\frac{m}{2\pi(m-1)\si_2^2}
  \int^{2\sqrt{\frac{m-1}{m}}\si_2}_{-2\sqrt{\frac{m-1}{m}}\si_2}
    |x|\sqrt{4\frac{(m-1)\si_2^2}{m}-x^2}~dx\\
  &=&
  \frac{8}{3\pi}\sqrt{\frac{m-1}{m}}\si_2=
  \frac{8}{3\pi}\sqrt{\frac{m-1}{m}p(1-p)}.
 \end{eqnarray*} Therefore, a.e. random matrix
$\overline{\mathbf{A}}_{n,m}$ enjoys the equation as follows:
$$\En\left(\overline{\mathbf{A}}_{n,m}\right)
=n^{3/2}\left(\frac{8}{3\pi}\sqrt{\frac{m-1}{m}p(1-p)}+o(1)\right).$$

We now turn to the estimate of the energy
$\En(\mathbf{A}_{n,m})=\En(G_{n,m}(p))$. Evidently,
$$\mathbf{J}_n-\mathbf{I}_{n,m}=(\mathbf{J}_n-\mathbf{I}_n)
+(\mathbf{I}_{n}-\mathbf{I}_{n,m}).$$ By virtue of Lemma
\ref{ky}, we have
$$\En(\mathbf{J}_n-\mathbf{I}_{n,m})\le
\En(\mathbf{J}_n-\mathbf{I}_n)+\En(\mathbf{I}_{n}-\mathbf{I}_{n,m}).$$
Recalling the definition of the quasi-unit matrix $\mathbf{I}_{n,m}$
and the fact that $\En(\mathbf{J}_n-\mathbf{I}_n)=2(n-1)$, we have
$\En(\mathbf{J}_n-\mathbf{I}_{n,m})\le O(n).$ According to
Eq.(\ref{Equ-7}), we can use a similar argument for the estimate of
the energy $\En(\mathbf{A})$ from $\En(\overline{\mathbf{A}})$ to
show that a.e. random matrix $\mathbf{A}_{n,m}$ enjoys the equation
as follows:
$$\En(\mathbf{A}_{n,m})=
n^{3/2}\left(\frac{8}{3\pi}\sqrt{\frac{m-1}{m}p(1-p)}+o(1)\right).$$
Since the random matrix $\mathbf{A}_{n,m}$ is the adjacency matrix
of $G_{n,m}(p)$, we thus show that a.e. random graph $G_{n,m}(p)$
enjoys the following equation:
$$\En(G_{n,m}(p))= n^{3/2}\left(\frac{8}{3\pi}\sqrt{\frac{m-1}{m}p(1-p)}+o(1)\right).
$$

In what follows, we shall use $\mathbf{A'}_{n,m}$ to denote the
adjacency matrix $\mathbf{A}(G'_{n,m}(p))$. It is easily seen that
if a random multipartite matrix $\mathbf{X}_{n,m}$ satisfies
condition (\ref{con2}), and $F_1$ is a pointmass at 0 and $F_2$ is a
Bernoulli distribution with mean $p$, then $\mathbf{X}_{n,m}$
coincides with the adjacency matrix $\mathbf{A'}_{n,m}$. Set
\begin{equation*}\label{Equ-8}
\overline{\mathbf{A}'}_{n,m}=\mathbf{A}'_{n,m}
-p(\mathbf{J}_n-\mathbf{I}'_{n,m})\end{equation*} where
$\mathbf{I}'_{n,m}$ is the quasi-unit matrix whose parts are the
same as $\mathbf{A}'_{n,m}$. One can readily check that each entry
in $\overline{\mathbf{A}'}_{n,m}$ has mean 0. It follows from the
second part of Theorem \ref{Main Thm} that
$$\Phi_{n^{-1/2}\overline{\mathbf{A'}}_{n,m}}(x)\rightarrow_P\Phi(x)
\mbox{ as }n\rightarrow\infty.
$$
Employing the argument analogous to the estimate of
$\En(p(\mathbf{J}_n-\mathbf{I}_{n,m}))$,
$\En(\overline{\mathbf{A}}_{n,m})$ and $\En(\mathbf{A}_{n,m})$, one
can evaluate, respectively,
$\En(p(\mathbf{J}_n-\mathbf{I}'_{n,m}))$,
$\En(\overline{\mathbf{A}'}_{n,m})$ and $\En(\mathbf{A}'_{n,m})$,
and finally show that a.e. random graph $G'_{n,m}(p)$ satisfying
condition (\ref{con2}) enjoys the following equation:
\begin{equation}\label{a}
\En(G'_{n,m}(p))=n^{3/2}\left(\frac{8}{3\pi}\sqrt{p(1-p)}+o(1)\right).
\end{equation}

\subsection{The energy of $G_{n;\nu_1\ldots\nu_m}(p)$}

In this part, we shall give an estimate of energy for the random
multipartite graph $G_{n;\nu_1\ldots\nu_m}(p)$ satisfying the
following condition:
\begin{equation}\label{less}
\lim_{n\rightarrow\infty}\max\{\nu_1(n),\ldots,\nu_m(n)\}>0 \mbox{
and there exist $\nu_i$ and $\nu_j$ , }
\lim_{n\rightarrow\infty}\frac{\nu_i(n)}{\nu_j(n)}<1 .
\end{equation}
Moreover, for random bipartite graphs $G_{n;\nu_1,\nu_2}(p)$
satisfying $\lim_{n\rightarrow\infty}\nu_i(n)>0$ $(i=1,2)$, we shall
formulate an exact estimate of its energy.

Anderson and Zeitouni \cite{AZ} established the existence of the LSD
of $\mathbf{X}_{n,m}$ with partitions satisfying condition
(\ref{less}). Unfortunately, they failed to get the exact form of
the LSD, which appears to be much hard and complicated. However, we
can establish the lower and upper bounds for the energy
$\En(G_{n;\nu_1\ldots\nu_m}(p))$ via another way.

Here, we still denote the adjacency matrix of multipartite graph
satisfying condition (\ref{less}) by $\mathbf{A}_{n,m}$. Without
loss of generality, we assume, for some $r\ge 1$,
$|V_1|,\ldots,|V_r|$ are of order $O(n)$ while
$|V_{r+1}|,\cdots,|V_m|$ of order $o(n)$. Let $\mathbf{A}'_{n,m}$ be
a random symmetric matrix such that

$$\mathbf{A}'_{n,m}(ij)=\left\{\begin{array}{ll}\mathbf{A}_{n,m}(ij) &
\mbox{if  $i$ or $j\notin V_s, 1\leq s\leq r$,}\\
t_{ij} & \mbox{if $i,j\in V_s, 1\leq s\leq r$ and $i>j$,}\\
0 & \mbox{if $i,j\in V_s  ~(r+1\leq s\leq m)$ or $i=j,$}\\
\end{array}\right.
$$
where $t_{ij}$'s are independent Bernulli r.v. with mean $p$.
Evidently, $\mathbf{A}'_{n,m}$ is a random multipartite matrix. By
means of Eq.(\ref{a}), we have
$\En(\mathbf{A}'_{n,m})=\left(\frac{8}{3\pi}\sqrt{p(1-p)}+o(1)\right)n^{3/2}$.

Set
\begin{equation}\label{dxa}
\mathbf{D}_n=\mathbf{A}'_{n,m}-\mathbf{A}_{n,m}=
\left(\begin{array}{lllllll} \mathbf{K}_1 & & & &\\
 & \mathbf{K}_2& & & \\
 & &\ddots& &\\
 & & & \mathbf{K}_r&\\
 & & & & \mathbf{0}\\
 & & & & &\ddots&\\
 & & & & & &\mathbf{0}
\end{array}\right)_{n\times n}\end{equation}
One can readily see that $\mathbf{K}_i~(i=1,\ldots,r)$ is a Wigner
matrix and thus a.e. $\mathbf{K}_i$ enjoys the following:
$$\En(\mathbf{K}_i)=\left(\frac{8}{3\pi}\sqrt{p(1-p)}+o(1)\right)(\nu_in)^{3/2}.$$
Consequently, a.e. matrix $\mathbf{D}_n$ satisfies the following:
$$\En(\mathbf{D}_n)=\left(\frac{8}{3\pi}\sqrt{p(1-p)}+o(1)\right)\left(\nu_1^{\frac{3}{2}}
+\cdots+\nu_r^{\frac{3}{2}}\right)n^{\frac{3}{2}}.
$$
By Eq.(\ref{dxa}), we have
$\mathbf{A}_{n,m}+\mathbf{D}_n=\mathbf{A}'_{n,m}$ and
$\mathbf{A}'_{n,m}+(-\mathbf{D}_n)=\mathbf{A}_{n,m}$. Employing
Lemma \ref{ky}, we deduce
$$\En(\mathbf{A}'_{n,m})-\En(\mathbf{D}_n)\leq \En(\mathbf{A}_{n,m})
\leq \En(\mathbf{A}'_{n,m})+\En(\mathbf{D}_n).
$$
Recalling that $\mathbf{A}_{n,m}$ is the adjacency matrix of
$G_{n;\nu_1\ldots\nu_m}(p)$, the following result is relevant.
\begin{thm}\label{budengbu}
Almost every random graph $G_{n;\nu_1\ldots\nu_m}(p)$ satisfies the
inequality below
$$\left(1-\sum_{i=1}^r\nu_i^{\frac{3}{2}}\right)n^{3/2}
\le
\En(G_{n;\nu_1\ldots\nu_m}(p))\left(\frac{8}{3\pi}\sqrt{p(1-p)}+o(1)\right)^{-1}
\le\left(1+\sum_{i=1}^r\nu_i^{\frac{3}{2}}\right)n^{3/2} .$$
\end{thm}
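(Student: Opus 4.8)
My approach would be a single two-sided comparison argument resting on Fan's inequality (Lemma \ref{ky}). The idea is to sandwich the energy of the true adjacency matrix $\mathbf{A}_{n,m}$ between the energy of the auxiliary matrix $\mathbf{A}'_{n,m}$ of Eq.(\ref{dxa}), whose energy has already been determined, and a controllable error coming from the block-diagonal difference $\mathbf{D}_n=\mathbf{A}'_{n,m}-\mathbf{A}_{n,m}$. Since $\mathbf{A}_{n,m}+\mathbf{D}_n=\mathbf{A}'_{n,m}$ and $\mathbf{A}'_{n,m}+(-\mathbf{D}_n)=\mathbf{A}_{n,m}$, two applications of Lemma \ref{ky} give immediately
$$\En(\mathbf{A}'_{n,m})-\En(\mathbf{D}_n)\le \En(\mathbf{A}_{n,m})\le \En(\mathbf{A}'_{n,m})+\En(\mathbf{D}_n),$$
and the theorem then amounts to inserting the two energy values and dividing by the common factor $\big(\tfrac{8}{3\pi}\sqrt{p(1-p)}+o(1)\big)$.

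The two inputs to this sandwich are supplied as follows. First, I would argue that $\mathbf{A}'_{n,m}$ meets condition (\ref{con2}): once the diagonal blocks on $V_1,\dots,V_r$ are filled with independent Bernoulli entries, every pair of vertices in $V_1\cup\cdots\cup V_r$ is joined by a Bernoulli edge, so treating each of these vertices as its own singleton part alongside the genuinely small parts $V_{r+1},\dots,V_m$ makes all part-fractions tend to $0$; hence Eq.(\ref{a}) applies and $\En(\mathbf{A}'_{n,m})=\big(\tfrac{8}{3\pi}\sqrt{p(1-p)}+o(1)\big)n^{3/2}$. Second, I would read off $\En(\mathbf{D}_n)$ from the block structure in Eq.(\ref{dxa}): the spectrum of $\mathbf{D}_n$ is the union of the spectra of $\mathbf{K}_1,\dots,\mathbf{K}_r$, each $\mathbf{K}_i$ being the adjacency matrix of an Erd\"{o}s--R\'{e}nyi graph on $\nu_i n$ vertices and hence a Wigner matrix, so the energy estimate obtained in Section 2 yields $\En(\mathbf{K}_i)=\big(\tfrac{8}{3\pi}\sqrt{p(1-p)}+o(1)\big)(\nu_i n)^{3/2}$ and therefore $\En(\mathbf{D}_n)=\big(\tfrac{8}{3\pi}\sqrt{p(1-p)}+o(1)\big)\big(\nu_1^{3/2}+\cdots+\nu_r^{3/2}\big)n^{3/2}$. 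Substituting and cancelling the prefactor (using $\sum_{i=1}^r\nu_i^{3/2}\le 1$ to merge the $o(1)$ terms) gives exactly the asserted bounds.

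The one place deserving real attention is the probabilistic bookkeeping rather than any inequality. The energy formula for $\mathbf{A}'_{n,m}$ and the $r$ individual Wigner estimates for the blocks $\mathbf{K}_i$ each hold only for almost every graph, so I must intersect finitely many almost-sure events; this is legitimate precisely because condition (\ref{less}) keeps $\max_i\nu_i$ bounded away from $0$, forcing every large part to have $\liminf_n\nu_i>0$ and hence $r$ bounded independently of $n$ (since $\sum_i\nu_i\le 1$). I expect this boundedness of $r$ to be the crux: were $r$ allowed to grow with $n$, the finite-intersection step and the uniform handling of the $o(1)$ errors would both fail. A secondary point to note is that Theorem \ref{Main Thm} delivers only convergence in probability while the Wigner blocks carry almost-sure convergence, but since only finitely many estimates are combined the two-sided bound still holds for almost every $G_{n;\nu_1\ldots\nu_m}(p)$.
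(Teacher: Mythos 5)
Your proposal is correct and follows essentially the same route as the paper: the same auxiliary matrix $\mathbf{A}'_{n,m}$ handled via Eq.(\ref{a}) under condition (\ref{con2}), the same block-diagonal difference $\mathbf{D}_n$ with Wigner blocks $\mathbf{K}_i$ estimated by the Section~2 result, and the same two-sided application of Fan's inequality (Lemma \ref{ky}). Your added bookkeeping --- that $r$ stays bounded because each large part has $\liminf_n \nu_i > 0$ while $\sum_i \nu_i \le 1$, so only finitely many ``almost every'' events need intersecting --- is a point the paper leaves implicit, and it is a sound refinement rather than a deviation.
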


\noindent\textbf{Remark 3.2}. Since $\nu_1,\ldots,\nu_r$ are
positive real numbers with $\sum_{i=1}^r\nu_i\le 1$, we have
\linebreak $\sum_{i=1}^r\nu_i(1-\nu_i^{1/2})>0$. Therefore,
$\sum_{i=1}^r\nu_i>\sum_{i=1}^r\nu_i^{3/2},$ and thus
$1>\sum_{i=1}^r\nu_i^{3/2}$. Hence, we can deduce, by the theorem
above, that a.e. random graph $G_{n;\nu_1\ldots\nu_m}(p)$ enjoys the
following $$\En(G_{n;\nu_1\ldots\nu_m}(p))=O(n^{3/2}).$$

In what follows, we investigate the energy of random bipartite
graphs $G_{n;\nu_1,\nu_2}(p)$ satisfying
$\lim_{n\rightarrow\infty}\nu_i(n)>0$ $(i=1,2)$, and present the
precise estimate of $\En(G_{n;\nu_1,\nu_2}(p))$ via
Mar\v{c}enko-Pastur Law.

For convenience, set $n_1=\nu_1n$ and $n_2=\nu_2n$. Let
$\mathbf{I}_{n,2}$ be a quasi-unit matrix with the same partition as
$\mathbf{A}_{n,2}$. Set
\begin{equation}\label{z}
\overline{\mathbf{A}}_{n,2}=\mathbf{A}_{n,2}-p(\mathbf{J}_n-\mathbf{I}_{n,2})=\left[\begin{array}{ll}
\mathbf{O}& \mathbf{X}^{T}\\
\mathbf{X}& \mathbf{O}\end{array}\right],\end{equation} where
$\mathbf{X}$ is a random matrix of order $n_2\times n_1$ in which
the entries $\mathbf{X}(ij)$ are iid. with mean zero and variance
$\si^2=p(1-p)$. By the equation
$$
\left(\begin{array}{ll} \lambda\mathbf{I}_{n_1} & \mathbf{0} \\
  -\mathbf{X} & \lambda\mathbf{I}_{n_2}
\end{array}\right)
\left(\begin{array}{ll} \lambda\mathbf{I}_{n_1} &
-\mathbf{X}^T \\
\mathbf{0} &
\lambda\mathbf{I}_{n_2}-\lambda^{-1}\mathbf{X}\mathbf{X}^T
\end{array}\right)=\lambda
\left(\begin{array}{ll} \lambda\mathbf{I}_{n_1}&
-\mathbf{X}^T \\
-\mathbf{X} & \lambda\mathbf{I}_{n_2}
\end{array}\right),$$
we have
$$\lambda^{n}\cdot\lambda^{n_1}|\lambda\mathbf{I}_{n_2}-\lambda^{-1}\mathbf{X}\mathbf{X}^{T}|=\lambda^{n}|\lambda
\mathbf{I}_n-\overline{\mathbf{A}}_{n,2}|,
$$
and consequently,
$$\lambda^{n_1}|\lambda^2\mathbf{I}_{n_2}-\mathbf{X}\mathbf{X}^{T}|=\lambda^{n_2}|\lambda
\mathbf{I}_n-\overline{\mathbf{A}}_{n,2}|.
$$
Thus, the eigenvalues of $\overline{\mathbf{A}}_{n,2}$ are
symmetric, and moreover $\overline{\lambda}$ is the eigenvalue of
$\frac{1}{\sqrt{n_1}}\overline{\mathbf{A}}_{n,2}$ if and only if
$\overline{\lambda}^2$ is the eigenvalue of
$\frac{1}{n_1}\mathbf{X}\mathbf{X}^{T}$. Therefore, we can
characterize the spectral of $\overline{\mathbf{A}}_{n,2}$ by the
spectral of $\mathbf{X}\mathbf{X}^T$. Bai formulated the LSD of
$\frac{1}{n_1}\mathbf{X}\mathbf{X}^T$ (Theorem 2.5 in \cite{B}) by
moment approach.
\begin{lem}[\bf  Mar\v{c}enko-Pastur Law \cite{B}]\label{mp}
Suppose that $\mathbf{X}(ij)$'s are iid. with mean zero and variance
$\si^2=p(1-p)$, and $\nu_2/\nu_1\rightarrow y\in (0,\infty)$. Then,
with probability 1, the ESD
$\Phi_{\frac{1}{n_1}\mathbf{X}\mathbf{X}^T}$ converges weakly to the
Mar\v{c}enko-Pastur Law $F_y$ as $n\rightarrow\infty$ where $F_y$
has the density
$$f_y(x)=
\frac{1}{2\pi p(1-p)x y}\sqrt{(b-x)(x-a)}~\mathbf{1}_{a\leq x\leq b}
$$
and has a point mass $1-1/y$ at the origin if $y>1$ where
$a=p(1-p)(1-\sqrt{y})^2$ and $b=p(1-p)(1+\sqrt{y})^2$.
\end{lem}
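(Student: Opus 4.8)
The plan is to establish the Mar\v{c}enko--Pastur law by the method of moments, mirroring the moment approach used elsewhere in the paper. Write $\mathbf{S}_n=\frac{1}{n_1}\mathbf{X}\mathbf{X}^T$, a symmetric nonnegative definite matrix of order $n_2$, and let $m_k^{(n)}=\int x^k\,d\Phi_{\mathbf{S}_n}(x)=\frac{1}{n_2}\operatorname{tr}(\mathbf{S}_n^k)$ denote its empirical moments. Since $\mathbf{X}\mathbf{X}^T$ has rank at most $n_1$, when $y>1$ the matrix automatically carries at least $n_2-n_1$ zero eigenvalues, which already accounts for the point mass $1-1/y$ at the origin; the nontrivial content is the absolutely continuous part on $[a,b]$. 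The strategy is then: (i) compute $\lim_{n\to\infty}\E[m_k^{(n)}]$ and identify it with the $k$-th moment $\beta_k$ of $F_y$; (ii) verify that $\{\beta_k\}$ determines $F_y$ uniquely; and (iii) upgrade convergence of the mean moments to almost sure weak convergence via a variance estimate and the Borel--Cantelli lemma.

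First I would expand the trace as
$$\operatorname{tr}\big((\mathbf{X}\mathbf{X}^T)^k\big)=\sum \mathbf{X}(i_1 j_1)\mathbf{X}(i_2 j_1)\mathbf{X}(i_2 j_2)\mathbf{X}(i_3 j_2)\cdots\mathbf{X}(i_1 j_k),$$
where the summation runs over $i_1,\dots,i_k\in[n_2]$ and $j_1,\dots,j_k\in[n_1]$ arranged in a closed cyclic pattern alternating between the two index sets. Each summand is a product of $2k$ entries, and since the $\mathbf{X}(ij)$ are independent with mean zero and variance $\sigma^2=p(1-p)$, it has nonzero expectation only if every distinct entry it involves occurs at least twice. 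Associating to each index sequence a bipartite multigraph on its realized row- and column-vertices, a degree count shows that, after normalizing by $\frac{1}{n_2 n_1^{k}}$, the dominant contribution comes precisely from sequences whose underlying graph is a tree with $k$ edges, each traversed exactly twice -- equivalently, the non-crossing pair structures encoded by Dyck-type paths on the bipartite walk. A tree with $\ell$ distinct $i$-vertices (from $[n_2]$) and $q$ distinct $j$-vertices (from $[n_1]$) satisfies $\ell+q=k+1$, contributes a factor $\sigma^{2k}$, and occurs with multiplicity of order $n_2^{\ell}n_1^{q}$; dividing by $n_2 n_1^{k}$ and using $n_2/n_1\to y$ leaves a surviving weight $\sigma^{2k}y^{\ell-1}$. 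Summing over the admissible non-crossing tree shapes, whose number with $\ell-1=r$ equals the Narayana number $\frac{1}{r+1}\binom{k}{r}\binom{k-1}{r}$, yields
$$\beta_k=\sigma^{2k}\sum_{r=0}^{k-1}\frac{1}{r+1}\binom{k}{r}\binom{k-1}{r}y^{r}.$$

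It then remains to check, by a direct computation, that this $\beta_k$ equals $\int_a^b x^k f_y(x)\,dx$ (together with the atom at $0$ when $y>1$), where $a=p(1-p)(1-\sqrt{y})^2$ and $b=p(1-p)(1+\sqrt{y})^2$; the substitution $x=\sigma^2(1+y-2\sqrt{y}\cos\vartheta)$ reduces this to a trigonometric integral whose evaluation reproduces the Narayana sum. Because $F_y$ has compact support its moments satisfy Carleman's condition, so $F_y$ is the unique distribution with moments $\{\beta_k\}$, and hence $\E[\Phi_{\mathbf{S}_n}]$ converges weakly to $F_y$. Finally I would promote this to almost sure convergence by estimating $\V(m_k^{(n)})$: expanding $(\operatorname{tr}(\mathbf{S}_n^k))^2$ as a double sum over index cycles and again classifying the resulting graphs, the terms that survive the subtraction of $(\E[\operatorname{tr}(\mathbf{S}_n^k)])^2$ are those in which the two cycles share at least one edge, forcing the loss of free vertices and hence an extra factor $n^{-2}$; thus $\V(m_k^{(n)})=O(n^{-2})$, which is summable, and Borel--Cantelli gives almost sure convergence of each moment, whence weak convergence of the ESD with probability $1$. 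The main obstacle will be step (i): isolating the leading tree-type terms while bounding the crossing and non-tree (cyclic) contributions as $o(1)$, and correctly tracking the two distinct normalizations $n_1$ and $n_2$ so that the powers of $y$ emerge as in the Narayana sum. An alternative that sidesteps much of this bookkeeping is the Stieltjes-transform method, in which one derives and solves a quadratic self-consistent equation for the transform $s(z)=\int (x-z)^{-1}\,dF_y(x)$.
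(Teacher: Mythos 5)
The paper does not actually prove this lemma: it quotes it as Theorem 2.5 of Bai's survey \cite{B}, which is established there by precisely the moment method you outline, so your sketch is essentially the cited proof — alternating-walk trace expansion, tree/Narayana-number enumeration of the dominant terms with the ratio $n_2/n_1\to y$ producing the weights $y^{r}$, rank-deficiency accounting for the atom $1-1/y$ when $y>1$, Carleman uniqueness from compact support, and an $O(n^{-2})$ variance bound with Borel--Cantelli for almost sure convergence. The one point to note is that your moment and variance computations implicitly require higher moments of the entries (your step (i) uses moments of order $2k$ and step (iii) uses fourth moments), so for general iid entries with only finite variance a standard truncation-and-centering step as in \cite{B} must be inserted; this is harmless in the paper's application, where the entries are bounded shifted Bernoulli variables.
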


By the symmetry of the eigenvalues of
$\frac{1}{\sqrt{n_1}}\overline{\mathbf{A}}_{n,2}$, to evaluate the
energy $\En(\frac{1}{\sqrt{n_1}}\overline{\mathbf{A}}_{n,2})$, we
just need to consider the positive eigenvalues. Define
$\Theta_{n_2}(x)=\frac{\sum1_{{\overline{\lambda}<x }}}{n_2}$. One
can see that the sum of the positive eigenvalues of
$\frac{1}{\sqrt{n_1}}\overline{\mathbf{A}}_{n,2}$ equals
$n_2\int_{0}^{\infty} xd\Theta_{n_2}(x)$. Suppose $0<x_1<x_2$, we
have
$$\Theta_{n_2}(x_2)-\Theta_{n_2}(x_1)=\Phi_{\frac{1}{n_1}\mathbf{X}\mathbf{X}^T}(x_2^2)
-\Phi_{\frac{1}{n_1}\mathbf{X}\mathbf{X}^T}(x_1^2).
$$
It follows that
$$\int_{0}^{\infty} xd\Theta_{n_2}(x)=\int_{0}^{\infty}
\sqrt{x}d\Phi_{\frac{1}{n_1}\mathbf{X}\mathbf{X}^T}(x).
$$
Note that all the eigenvalues of
$\frac{1}{n_1}\mathbf{X}\mathbf{X}^T$ are nonnegative. By the moment
approach (see \cite{B} for instance), we have
\begin{eqnarray*}\int
x^2d\Phi_{\frac{1}{n_1}\mathbf{X}\mathbf{X}^{T}}(x)
&=&\int_{0}^{\infty}
x^2d\Phi_{\frac{1}{n_1}\mathbf{X}\mathbf{X}^{T}}(x)\\
&\rightarrow& \int_{0}^{\infty} x^2dF_y(x)\mbox{ a.s. }
(n\rightarrow\infty)\\
&=&\int x^2dF_y(x)
\end{eqnarray*}
Analogous to the proof of Eq.(\ref{Convergence}), we can deduce that
$$
\lim_{n\rightarrow\infty}\int_{0}^{\infty}
\sqrt{x}d\Phi_{\frac{1}{n_1}\mathbf{X}\mathbf{X}^{T}}(x)=
\displaystyle\int_{0}^{\infty} \sqrt{x}dF_y(x)  \mbox{ a.s. }$$
Therefore,
$$\lim_{n\rightarrow\infty}\int_{0}^{\infty}
xd\Theta_{n_2}(x)=\int_{\sqrt{a}}^{\sqrt{b}}\frac{1}{\pi
p(1-p)y}\sqrt{(b-x^2)(x^2-a)}~dx\mbox{ a.s. }
$$
Let
$$\Lambda=\int_{\sqrt{a}}^{\sqrt{b}}\frac{1}{\pi
p(1-p)y}\sqrt{(b-x^2)(x^2-a)}~dx.
$$
We obtain that for a.e. $\overline{\mathbf{A}}_{n,2}$ the sum of the
positive eigenvalues is $(\Lambda+o(1))n_2\sqrt{n_1}$. Thus, a.e.
$\En(\overline{\mathbf{A}}_{n,2})$ enjoys the equation as follows:
$$\En(\overline{\mathbf{A}}_{n,2})=(2\Lambda+o(1))n_2\sqrt{n_1}.$$
Furthermore, we can get
$$\Lambda=\frac{\sqrt{b}[(a+b)\Ep(1-a/b)-2a\Ek(1-a/b)]}{3\pi
p(1-p)y},
$$
where $\Ek$ is the complete elliptic integral of the first kind and
$\Ep$ is the complete elliptic integral of the second kind. Let
$t\in[0,1]$, the two kinds of complete elliptic integral are defined
as follows
$$\Ek(t)=\int_0^{\frac{\pi}{2}}\frac{d\theta}{\sqrt{1-t\sin^2\theta}}
\mbox{  and }
\Ep(t)=\int_0^{\frac{\pi}{2}}\sqrt{1-t\sin^2\theta}~d\theta.
$$
The value can be got by mathematical software for every parameter
$t$.

Employing Eq.(\ref{z}) and Lemma \ref{ky}, we have
$$\En(\overline{\mathbf{A}}_{n,2})-\En(p(\mathbf{J}_n-\mathbf{I}_{n,2}))\leq
\En({\mathbf{A}_{n,2}})\leq\En({\overline{\mathbf{A}}_{n,2}})+\En(p(\mathbf{J}_n-\mathbf{I}_{n,2})).$$
Together with the fact that
$\En(p(\mathbf{J}_n-\mathbf{I}_{n,2}))=2p\sqrt{\nu_1\nu_2}n$ and
$n_2\sqrt{n_1}=\nu_2\sqrt{\nu_1}n^{3/2}$, we get
$$\En({\mathbf{A}_{n,2}})=(2\nu_2\sqrt{\nu_1}\Lambda+o(1))n^{3/2}.$$
Therefore, the following theorem is relevant.

\begin{thm}\label{bi}
Almost every random bipartite graph $G_{n;\nu_1,\nu_2}(p)$ with
$\nu_2/\nu_1\rightarrow y$ enjoys
$$\mathscr{E}(G_{n;\nu_1,\nu_2}(p))=(2\nu_2\sqrt{\nu_1}\Lambda+o(1))n^{3/2}.$$
\end{thm}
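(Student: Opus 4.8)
The plan is to strip off the deterministic mean and reduce the energy of $\mathbf{A}_{n,2}$ to a sum of singular values governed by the Mar\v{c}enko--Pastur law. Following Eq.(\ref{z}) I would write $\mathbf{A}_{n,2}=\overline{\mathbf{A}}_{n,2}+p(\mathbf{J}_n-\mathbf{I}_{n,2})$, where $\overline{\mathbf{A}}_{n,2}$ has mean-zero entries and the block antidiagonal form with the random $n_2\times n_1$ block $\mathbf{X}$. Two applications of Fan's inequality (Lemma \ref{ky}), one to this sum and one to $\overline{\mathbf{A}}_{n,2}=\mathbf{A}_{n,2}+p(\mathbf{I}_{n,2}-\mathbf{J}_n)$, sandwich $\En(\mathbf{A}_{n,2})$ between $\En(\overline{\mathbf{A}}_{n,2})\pm\En\bigl(p(\mathbf{J}_n-\mathbf{I}_{n,2})\bigr)$. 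The matrix $\mathbf{J}_n-\mathbf{I}_{n,2}$ is the adjacency matrix of the complete bipartite graph $K_{n_1,n_2}$, whose only nonzero eigenvalues are $\pm\sqrt{n_1n_2}$, so $\En\bigl(p(\mathbf{J}_n-\mathbf{I}_{n,2})\bigr)=2p\sqrt{\nu_1\nu_2}\,n=O(n)=o(n^{3/2})$. Hence this perturbation is swallowed by the error term and everything reduces to estimating $\En(\overline{\mathbf{A}}_{n,2})$.

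For that core estimate I would use the block structure: the determinant identity displayed before Lemma \ref{mp} shows the spectrum of $\overline{\mathbf{A}}_{n,2}$ is symmetric and that $\overline{\lambda}$ is an eigenvalue of $n_1^{-1/2}\overline{\mathbf{A}}_{n,2}$ exactly when $\overline{\lambda}^2$ is an eigenvalue of $n_1^{-1}\mathbf{X}\mathbf{X}^T$. Summing the absolute values over the symmetric spectrum therefore gives $\En(\overline{\mathbf{A}}_{n,2})=2n_2\sqrt{n_1}\int_0^\infty\sqrt{x}\,d\Phi_{n_1^{-1}\mathbf{X}\mathbf{X}^T}(x)$, the factor $2$ coming from the $\pm$ symmetry and $n_2\sqrt{n_1}$ from the singular-value rescaling. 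Lemma \ref{mp} (Mar\v{c}enko--Pastur) furnishes the almost-sure weak limit $F_y$ of $\Phi_{n_1^{-1}\mathbf{X}\mathbf{X}^T}$, and the whole problem becomes the passage $\int_0^\infty\sqrt{x}\,d\Phi_{n_1^{-1}\mathbf{X}\mathbf{X}^T}\to\int_0^\infty\sqrt{x}\,dF_y=\Lambda$, with the point mass of $F_y$ at the origin contributing nothing since $\sqrt{0}=0$.

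The delicate step, and the one I expect to be the main obstacle, is justifying this convergence of $\int\sqrt{x}\,d(\cdot)$, because $\sqrt{x}$ is unbounded and weak convergence does not automatically carry unbounded test functions. I would resolve it exactly as in the proof of Eq.(\ref{Convergence}): split the integral at $x=1$; on $[0,1]$ the integrand $\sqrt{x}$ is bounded so the bounded convergence theorem applies, while on $[1,\infty)$ I dominate $\sqrt{x}\le x^2$ and combine the second-moment convergence $\int x^2\,d\Phi_{n_1^{-1}\mathbf{X}\mathbf{X}^T}\to\int x^2\,dF_y$ (from the moment approach) with Lemma \ref{Billinsley} to control the tail. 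This yields $\En(\overline{\mathbf{A}}_{n,2})=(2\Lambda+o(1))n_2\sqrt{n_1}$ almost surely.

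Finally, substituting $n_1=\nu_1 n$ and $n_2=\nu_2 n$ gives $n_2\sqrt{n_1}=\nu_2\sqrt{\nu_1}\,n^{3/2}$, so the Fan sandwich delivers $\En(\mathbf{A}_{n,2})=(2\nu_2\sqrt{\nu_1}\Lambda+o(1))n^{3/2}$, which is the assertion. What remains is the purely computational evaluation of $\Lambda=\int_{\sqrt{a}}^{\sqrt{b}}\frac{1}{\pi p(1-p)y}\sqrt{(b-x^2)(x^2-a)}\,dx$; the substitution $x^2\mapsto x$ turns this into a standard elliptic integral, producing the closed form in terms of $\Ek$ and $\Ep$. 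This last calculation carries the bulk of the algebra but presents no conceptual difficulty.
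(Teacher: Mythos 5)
Your proposal is correct and takes essentially the same route as the paper's proof: the same Fan-inequality (Lemma \ref{ky}) sandwich with $\En\bigl(p(\mathbf{J}_n-\mathbf{I}_{n,2})\bigr)=2p\sqrt{\nu_1\nu_2}\,n=O(n)$, the same block determinant identity reducing the symmetric spectrum of $\overline{\mathbf{A}}_{n,2}$ to that of $n_1^{-1}\mathbf{X}\mathbf{X}^T$, and the same Mar\v{c}enko--Pastur limit combined with the truncation/domination argument of Eq.(\ref{Convergence}) (bounded convergence on the bounded piece, $\sqrt{x}\le x^2$ with second-moment convergence and Lemma \ref{Billinsley} on the tail) to obtain $\En(\overline{\mathbf{A}}_{n,2})=(2\Lambda+o(1))n_2\sqrt{n_1}$. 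Your explicit justifications that the nonzero eigenvalues of $\mathbf{J}_n-\mathbf{I}_{n,2}$ are $\pm\sqrt{n_1n_2}$ and that the point mass of $F_y$ at the origin contributes nothing are details the paper leaves implicit, but the argument is the same.
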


We now compare the above estimate of the energy
$\mathscr{E}(G_{n;\nu_1,\nu_2}(p))$ with bounds obtained in Theorem
\ref{budengbu} for $p=1/2$. In fact, Koolen and Moulton \cite{Ko}
established the following upper bound of the energy $\En(G)$ for
simple graphs $G$:
$$\En(G)\le\frac{n}{2}(\sqrt{n}+1).$$
Consequently, for $p=1/2$, this upper bound is better than ours. So
we turn our attention to compare the estimate of
$\En(G_{n;\nu_1,\nu_2}(1/2))$ in Theorem \ref{bi} with the lower
bound in Theorem \ref{budengbu}. By the numerical computation (see
the table below), the energy $\En(G_{n;\nu_1,\nu_2}(1/2))$ of a.e.
random bipartite $G_{n;\nu_1,\nu_2}(1/2)$ is close to our lower
bound.

\begin{center}\begin{tabular}{|c|c|c|}\hline

$y$ & $\En(G_{n;\nu_1,\nu_2}(p))$ & lower bound of
$\En(G_{n;\nu_1,\nu_2}(p))$\\\hline

1 & $(0.3001+o(1))n^{3/2}$ & $(0.1243+o(1))n^{3/2}$\\\hline

2 & $(0.2539+o(1))n^{3/2}$ & $(0.1118+o(1))n^{3/2}$\\\hline

3 & $(0.2071+o(1))n^{3/2}$ & $(0.0957+o(1))n^{3/2}$\\\hline

4 & $(0.1731+o(1))n^{3/2}$ & $(0.0828+o(1))n^{3/2}$\\\hline

5 & $(0.1482+o(1))n^{3/2}$ & $(0.0727+o(1))n^{3/2}$\\\hline

6 & $(0.1294+o(1))n^{3/2}$ & $(0.06470+o(1))n^{3/2}$\\\hline

7 & $(0.1148+o(1))n^{3/2}$ & $(0.05828+o(1))n^{3/2}$\\\hline

8 & $(0.1031+o(1))n^{3/2}$ & $(0.05301+o(1))n^{3/2}$\\\hline

9 & $(0.09353+o(1))n^{3/2}$ & $(0.04862+o(1))n^{3/2}$\\\hline

10 & $(0.08558+o(1))n^{3/2}$ & $(0.04491+o(1))n^{3/2}$\\\hline
\end{tabular}\end{center}

\end{document}